\theoremstyle{plain}
\newtheorem{theorem}{Theorem}[section]
\newtheorem{lemma}{Lemma}[section]
\newtheorem{prop}{Proposition}[section]
\newtheorem{cor}{Corollary}[section]
\theoremstyle{definition}
\newtheorem{definition}{Definition}[section]
\theoremstyle{remark}
\newtheorem{remark}{Remark}[section]
\numberwithin{equation}{section}
\newcommand{\real}{{\mathbb R}}
\newcommand{\roots}{{\mathcal R}}
\newcommand{\proots}{{\mathcal R}^+}
\newcommand{\comp}{{\mathbb C}}
\newcommand{\G}{{\mathbb G}}
\newcommand{\Lie}{{\mathcal G}}
\newcommand{\T}{{\mathcal T}}
\newcommand{\To}{{\mathbb T}}
\newcommand{\Ho}{{\mathcal H}}
\newcommand{\La}{{\mathcal L}}
 \DeclareMathOperator{\Ad}{Ad}
 \DeclareMathOperator{\ad}{ad}
 \DeclareMathOperator{\Aut}{Aut}
 \DeclareMathOperator{\End}{End} 
 \DeclareMathOperator{\spa}{span}
 \DeclareMathOperator{\tr}{trace}
  \DeclareMathOperator{\SU}{SU}
   \DeclareMathOperator{\U}{U}
     \DeclareMathOperator{\su}{su}
  \DeclareMathOperator{\GL}{GL}
 \DeclareMathOperator{\gl}{gl}
 \DeclareMathOperator{\Le}{Length}
\begin{document}
\title[Length spectra of sub-Riemannian metrics on Lie groups]{Length spectra of sub-Riemannian metrics on compact Lie groups}
\author[Domokos, Krauel, Pigno, Shanbrom, VanValkenburgh]{Andr\'{a}s Domokos, Matthew Krauel, Vincent Pigno, \newline Corey Shanbrom, Michael VanValkenburgh}
\address{Department of Mathematics and Statistics,
California State University Sacramento, 6000 J Street, Sacramento, CA, 95819, USA}
\email{domokos@csus.edu, krauel@csus.edu, \newline vincent.pigno@csus.edu, corey.shanbrom@csus.edu, mjv@csus.edu}

\date{\today}

\keywords{sub-Riemannian geometry, geodesics, root systems, compact Lie groups}

\subjclass[2010]{53C17, 53C22, 22E30, 51N30}

\begin{abstract}

Length spectra for Riemannian metrics are well studied,  while sub-Riemannian length spectra have been largely unexplored.
Here we give the length spectrum for a canonical sub-Riemannian structure attached to any compact Lie group by restricting its Killing form to the sum of the root spaces. 
Surprisingly, the shortest loops are the same in both the Riemannian and sub-Riemannian cases.
We 
provide specific calculations for  $\SU (2)$ and $\SU (3)$.

\end{abstract}

\maketitle

\section{Introduction}\label{sec:1Intro}

\noindent
While much is known about the existence and geometric properties of closed geodesics on Riemannian manifolds in general \cite{kling}, and Lie groups in particular, we cannot say the same thing about their connection with the algebraic structure of Lie groups. Moreover, the sub-Riemannian setting has been mostly neglected.  

In the case of simple, simply connected, compact Lie groups, Helgason obtained the length of the shortest Riemannian geodesic loop in terms of the length of the highest root \cite[Proposition 11.9]{hel}.
We expand upon Helgason's work using more algebraic methods, obtaining the sub-Riemannian and Riemannian geodesic loop length spectra. The sub-Riemannian structure consists of the horizontal distribution defined by the orthogonal complement of a Cartan subalgebra and the restriction of the bi-invariant metric defined by the Killing form.  To our knowledge, nothing was previously known about the connection between root systems and lengths of sub-Riemannian geodesic loops.  

In Section \ref{sec:2Background} we provide the background for the root space decomposition of semi-simple, compact Lie algebras and prove Theorem \ref{thm:SubRiemannianGeodesicsNormal}, which shows that all sub-Riemannian geodesics are normal. In Section \ref{sec:3LengthsSR} we work in a simple, simply connected, compact Lie group. We find connections between the algebraic information encoded in the root system of the Lie algebra and properties of Riemannian and sub-Riemannian geodesic loops. In Theorems \ref{thm:RGeodesics/RPrime} and \ref{thm:PurelySubRLengths} we describe the entire length spectra of the Riemannian and certain sub-Riemannian geodesic loops.  
In Theorem \ref{thm:Xdecomposition} we find properties that help describe the remaining sub-Riemannian geodesic loops. 
Moreover, in Theorem \ref{thm:shortestlengths}, we compute the lengths of the shortest Riemmanian and sub-Riemannian loops, which unexpectedly turn out to be equal. Further, in Corollary \ref{cor:RootMax} we derive a purely algebraic formula for the length of the highest root.
In Sections \ref{sec:4SU(2)} and \ref{sec:5SU(3)} we provide relevant examples in $\SU (2)$ and $\SU (3)$.

Note that the terms \emph{length spectrum} and \emph{geodesic} have different variants in the literature.  By length spectrum, we mean the set of lengths of all primitive geodesic loops.  A sub-Riemannian geodesic is defined as in \cite{mont} as a locally length minimizing curve.  While in general such curves may not satisfy the geodesic equations, in our setting we show that the two notions coincide (see Theorem \ref{thm:SubRiemannianGeodesicsNormal}).  

\section{General results}\label{sec:2Background}
\noindent In this section we assume that $\G$ is a semi-simple, connected, compact matrix Lie group. This assumption is suited to present and prove some general results about sub-Riemannian geodesics, and we will use the more restrictive simple and simply connected assumptions in the following sections, where we prove results about sub-Riemannian geodesic loops.  
Our notations and definitions will be geared toward the presentation of the sub-Riemannian geometry, rather than the algebraic theory of Lie groups.

 The Lie algebra of $\G$ can be defined in terms of the matrix exponential:
$$\Lie = \left\{ X  \in {\mathcal M}_n  \;  :  \; e^{t X} \in \G \; , \; \; \forall \; t \in \real \right\} \, ,$$
where ${\mathcal M}_n$ is the linear space of $n \times n$ real or complex matrices in which $\G$ is included.
Then $\Lie$ is a real Lie algebra endowed with the commutator operator 
$$[X,Y] = XY-YX \, .$$

A Lie algebra is called simple if it is non-commutative and does not have any non-trivial ideals, and it is called semi-simple if it is the direct sum of simple Lie algebras. A Lie group is simple or semi-simple if its Lie algebra has the corresponding property. 

The adjoint representation of $\G$ is the group homomorphism
$$\Ad \colon \G \to \Aut (\Lie ) \, , \; \; \Ad (g) (X) = g X g^{-1} \, ,$$
while its differential at the identity is the adjoint representation of its Lie algebra
$$\ad \colon \Lie \to \End (\Lie) \, , \; \; \ad X (Y) = [X,Y] \, .$$ 
Note that, among semi-simple Lie algebras, $\Ad$ is an irreducible representation of $\G$ if and only if $\G$ is simple.

The Killing form  
$$K(X,Y) = \tr (\ad X \cdot \ad Y ) \, ,$$
is negative definite and non-degenerate on the Lie algebra of a semi-simple, compact Lie group, and hence we can define an inner product on $\Lie$ as
\begin{equation}\label{eq:InnerProduct}
\langle X , Y \rangle = - \rho \, K(X,Y) \, ,
\end{equation}
where $\rho> 0$ is a constant which can be adjusted according to our normalization preferences. The inner product \eqref{eq:InnerProduct} generates a bi-invariant metric on $\G$.
The Killing form is $\Ad$-invariant, so $\Ad (g)$ is a unitary linear transformation of $\Lie$ for all $g \in \mathbb G$ and $\ad X$ is skew-symmetric for all $X \in \Lie$.

Let $\To$ be a maximal torus in $\G$ and $\T$ be its Lie algebra. In this case, $\T$ is a maximal commutative subalgebra of $\Lie$ called the Cartan subalgebra. Its dimension is called the rank of $\Lie$, and also the rank of $\G$.
Consider an orthonormal basis ${\mathcal B}_{\T} = \{ T_1, \dots , T_r \} $ of $\T$, which will be fixed throughout the paper.
 
We extend the inner product \eqref{eq:InnerProduct} on  $\Lie$ bi-linearly to the complexified Lie algebra $\Lie_{\comp} = \Lie \oplus i \Lie$.  
The mappings $\ad T \colon \Lie_{\comp} \to \Lie_{\comp}$, $T \in \T$, commute and are skew-symmetric, so they share eigenspaces and  have purely imaginary eigenvalues.

Once we fix the orthonormal basis ${\mathcal B}_{\T}$, we can identify $\T^*$ with $\T$ and define the roots as elements of the Cartan subalgebra, as in \cite{dom15}. 

\begin{definition}\label{def:Root}
We define $R \in \T$ to be a root if $R \neq 0$ and the root space $\Lie_{R} \neq \{0\}$, where
\begin{equation}\notag  \label{eq:RootSpace}
\Lie_{R} = \{ Z \in \Lie_{\comp} \; : \; \ad T (Z) = i \; \langle R, T\rangle \, Z  \, , \; \; \forall \; \; T \in \T \;  \}\, .
\end{equation}
Additionally, we use the notation $\Lie_0 = \T_{\comp}=\T \oplus i\T$.
\end{definition}

Let $\roots$ be the set of all roots, which will be partially ordered by the relation $R_1 > R_2$ if  the first non-zero coordinate of $R_1 - R_2$ relative to the ordered basis ${\mathcal B}_{\T}$ is positive. We call a root positive if its first non-zero coordinate is positive and let $\proots$ denote the set of all positive roots.
For the most important properties of  $\Lie_{R}$ we quote \cite{dui,knapp}:
\begin{align*}
(i) &  \; \; \; \dim_{\comp} \Lie_{R} =1. \nonumber\\
(ii) & \; \; \; \text{If} \; R \in \roots, \; \text{then} \; -R \in \roots. \nonumber\\
(iii) & \; \; \; \Lie_{-R} = \{ X - iY \; : \; X + iY \in \Lie_{R} \}. \nonumber \\
(iv) & \; \; \; \langle \Lie_{R_1}, \Lie_{R_2} \rangle = 0  \; \; \text{if} \; \;  R_1, R_2 \in \roots \cup \{0\}  \, , \; R_1 \neq \pm R_2 .\\
(v) & \; \; \; [\Lie_{R_1} , \Lie_{R_2} ] \; \;  \left\{ \begin{array}{ll} = \Lie_{R_1 + R_2} \; &\text{if} \; \; R_1 + R_2 \in \roots\\ = \{0\} \;  &\text{if} \; \; R_1 + R_2 \not\in \roots \, \text{and} \; R_1 + R_2  \neq 0\\ \subset  i \T \;  &\text{if} \; \; R_1 + R_2 =0. \end{array} \right. \nonumber \\
(vi) & \; \; \; \text{If}  \; \; Z_{R} \in \Lie_{R}\; \; \text{and} \; \;  Z_{-R} \in \Lie_{-R} \, , \; \;  \text{then} \; \; [Z_{R}, Z_{-R}]= i \, \langle Z_{R} , Z_{-R} \rangle \; R .\nonumber 
\end{align*}
The above properties of $\Lie_{R}$ and the real root space decomposition
 $$\Lie = \T \oplus \Ho \, $$ where
 \begin{equation}\label{eq:LieAlgebraDecomposition}
\Ho = \T^{\perp} = \bigoplus_{R \in \proots} \, (\Lie_{R} \oplus \Lie_{-R} ) \cap \Lie \, ,
 \end{equation}
allow us to choose an orthonormal basis of $\Ho $,
\begin{equation}\label{eq:HorizontalBasis}
{\mathcal B}_{\Ho} = \{ X_1, \dots , X_k, Y_1, \dots , Y_k \} \, ,
\end{equation}
with the following properties:
\begin{equation} \label{eq:HorizontalBasisProperties}
 \begin{aligned}
 (i) & \; \; \text{For all} \; \; 1\leq j \leq k \; \; \text{there exists}  \; \; R_j \in \proots \; \; \text{such that} \\
 &\; \; \{X_j, Y_j \} \subset (\Lie_{R_j}\oplus \Lie_{-R_j}) \cap \Lie .\\
(ii) & \; \; E_{\pm j} = X_{j} \pm iY_j \in \Lie_{\pm R_j} .  \\
(iii) & \; \; \langle E_{j} , E_{-j} \rangle = 2 .\\
(iv) & \; \; [X_{j}, Y_{j}]= - R_j \, .
\end{aligned}
\end{equation}

Notice that  $\{ (g , \Ho_g ) \, : \, g \in \G\}$, where $\Ho_g = g \Ho$, forms a sub-bundle of the tangent bundle of $\G$, which we call the horizontal sub-bundle. The property $\T \subset [\Ho , \Ho ] $ shows that this horizontal sub-bundle is bracket-generating, hence its choice defines a sub-Riemannian metric on $\G$ in the following way (see \cite{mont}).

We call an absolutely continuous curve $\gamma \colon [a,b] \to \G$ horizontal if $\gamma ' (t) \in \Ho_{\gamma (t)}$ for every $t \in [a,b]$ where $\gamma ' (t)$ exists.
The length of a horizontal curve is defined as
$$\Le (\gamma) = \int_a^b \lVert \gamma' (t) \rVert \, dt \, .$$
The bracket-generating property implies that any two points can be connected by horizontal curves and therefore we can define a sub-Riemannian (also called Carnot-Carath\'{e}odory) distance as 
$$d(x,y) = \inf \{ \Le (\gamma ) : \, \gamma \text{ is a horizontal curve connecting $x$ and $y$} \} \, .$$   

We say that a horizontal curve $\gamma$ is a sub-Riemannian geodesic if locally it is a length minimizer. We call a sub-Riemannian geodesic $\gamma \colon [0,1] \to \G$ a sub-Riemannian geodesic loop if $\gamma (0) = \gamma (1) = I$ and $\gamma (t) \neq I$ for all $t \in (0,1)$. Here, $I$ denotes the identity matrix.

If we do not restrict the curve $\gamma$ to be horizontal, then similar definitions lead to Riemannian geodesics and geodesic loops.
With the choice of the bi-invariant inner product \eqref{eq:InnerProduct}, the Riemannian geodesics through the identity and in the direction of an arbitrary $X \in \Lie$ have the form (see \cite[Chapter 3]{arv})
$$\gamma (t) = e^{tX} \, .$$

\begin{remark}\label{remark:Smooth}
With our assumptions on $\G$ and $\Ho$, all sub-Riemannian geodesics are smooth \cite[Theorem 3]{mont1}. Moreover, as the inner product on $\Ho$ is the restriction of the inner product \eqref{eq:InnerProduct} defined on $\Lie$, a sub-Riemannian geodesic is also a smooth curve of equal Riemannian length.
\end{remark}

Sub-Riemannian geodesics  can be characterized in various ways. We follow the description from \cite{mont1,mont2,mont}, but also see \cite{agra1,bosc1}.
If a sub-Riemannian geodesic is a projection to $\G$ of a solution to Hamilton's equations for the sub-Riemannian Hamiltonian, then we call it normal,  otherwise we call it abnormal.  If a sub-Riemannian geodesic is a critical point of the endpoint map, then we call it singular, otherwise we call it regular \cite{mont1}. The following implications hold.
\begin{prop}\label{prop:AbnormalAreSingular} \cite[Theorem 5.8]{mont}
All regular sub-Riemannian \linebreak geodesics are normal and, therefore,  all abnormal geodesics are singular.
\end{prop}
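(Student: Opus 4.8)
The statement has two parts, and the second follows from the first by contraposition: if a geodesic is abnormal, i.e.\ not normal, then by the first part it cannot be regular, hence it must be singular. So the essential content is that every regular geodesic is normal, and the plan is to prove this via the Lagrange multiplier rule applied to the endpoint map.

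First I would reformulate length minimization as constrained energy minimization. After reparametrizing to constant speed, a sub-Riemannian geodesic is a minimizer of the energy $E(\gamma)=\tfrac12\int_0^1\lVert\gamma'(t)\rVert^2\,dt$ among horizontal curves sharing its endpoints; by Cauchy--Schwarz these minimizers coincide with the constant-speed length minimizers. Encoding horizontal curves through the identity by their controls $u$ relative to the left-invariant horizontal frame $\{X_1,\dots,X_k,Y_1,\dots,Y_k\}$, consider the endpoint map $F\colon u\mapsto\gamma_u(1)$, which is smooth on the space of $L^2$ controls. By definition the geodesic is regular exactly when its control $u^\ast$ is a regular point of $F$, i.e.\ $dF_{u^\ast}$ is surjective, and singular otherwise.

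At the constrained (local) minimizer $u^\ast$ I would invoke the Lagrange multiplier rule: there exist a scalar $\lambda_0\ge 0$ and a covector $\lambda_1\in T^\ast_{F(u^\ast)}\G$, not both zero, with
$$\lambda_0\,dE_{u^\ast}=\lambda_1\circ dF_{u^\ast}.$$
Then I would split into two cases. If $\lambda_0=0$, then $\lambda_1\neq 0$ annihilates the image of $dF_{u^\ast}$, so $dF_{u^\ast}$ fails to be surjective and $\gamma$ is singular; this is precisely the abnormal branch. If $\lambda_0\neq 0$, normalize $\lambda_0=1$ and transcribe the identity $dE_{u^\ast}=\lambda_1\circ dF_{u^\ast}$ into an adjoint (costate) equation: taking $p(t)$ to be $\lambda_1$ pulled back along the flow, one verifies that $(\gamma,p)$ solves Hamilton's equations for the sub-Riemannian Hamiltonian $H(q,p)=\tfrac12\sum_i\langle p,X_i(q)\rangle^2$, so $\gamma$ is normal. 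Since a regular geodesic has $dF_{u^\ast}$ surjective, the case $\lambda_0=0$ is impossible, as it would force $\lambda_1=0$. Hence $\lambda_0\neq 0$ and $\gamma$ is normal, which proves the first assertion; the contrapositive yields the second.

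The main obstacle will be the analytic justification rather than the case split. One must check that $F$ is genuinely $C^1$ on the control space and that the Lagrange multiplier rule applies in this infinite-dimensional setting, which rests on a closed-range property for $dF_{u^\ast}$. The more delicate step is the normal case: turning the abstract multiplier identity $dE_{u^\ast}=\lambda_1\circ dF_{u^\ast}$ into the pointwise Hamiltonian system, i.e.\ showing that the pulled-back covector $p(t)$ satisfies the correct adjoint ODE and that its pairing with the horizontal frame reconstructs $H$ along the curve, is where the real work lies.
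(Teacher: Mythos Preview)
The paper does not give its own proof of this proposition: it is quoted verbatim as \cite[Theorem 5.8]{mont} and used as a black box. So there is no in-paper argument to compare against.

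Your outline is the standard Lagrange-multiplier/Pontryagin argument and is correct in structure; it is essentially the proof one finds in Montgomery's book. The key logical point---that regularity (surjectivity of $dF_{u^\ast}$) rules out the $\lambda_0=0$ branch, forcing the normal multiplier---is exactly right, and your contrapositive for the second clause is fine. The caveats you flag (smoothness of the endpoint map on $L^2$ controls, applicability of the multiplier rule in infinite dimensions, and the passage from the abstract multiplier identity to the adjoint/Hamiltonian equations) are precisely the technical points Montgomery handles; none of them is specific to the Lie-group setting of this paper, and the authors here are content to import the result rather than reprove it.
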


If the horizontal distribution is fat, which means that for all $X \in \Ho$
$$\Ho + [X, \Ho ] = \Lie \, ,$$
then all sub-Riemannian geodesics are normal \cite[Proposition 4]{mont2}. For example, the horizontal distribution is fat in the case of $\SU (2)$, but not in the case of  $\SU (3)$.

Regarding the form of the normal geodesics we have the following result, which is \cite[Theorem 11.8]{mont} adapted to our setting. See also \cite{bosc1} and the references therein.

\begin{prop}\label{prop:GeodesicsForm} 
Consider a semi-simple, connected, compact Lie group $\G$ endowed with horizontal distribution defined by the orthogonal complement $\Ho$ of a Cartan subalgebra $\T$, and inner product \eqref{eq:InnerProduct}. 
Then the normal sub-Riemannian geodesics through the identity are of the form
\begin{equation}\label{eq:SubRiemannianGeodesics}
\gamma (t) = e^{tX} \cdot e^{- tX^{\perp}} \, ,
\end{equation}
where $X$ is any element of  $\Lie$ and $X^{\perp}$ is the orthogonal projection of $X$ onto $\T$.
\end{prop}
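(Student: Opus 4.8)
The plan is to pass to Hamilton's equations for the sub-Riemannian Hamiltonian, reduce them to the Lie algebra using left-invariance, and then integrate the resulting ordinary differential equations explicitly. Since the distribution $\Ho$ and the inner product \eqref{eq:InnerProduct} are both left-invariant, I would trivialize $T^*\G \cong \G \times \Lie^*$ and use the Killing-form inner product to identify $\Lie^* \cong \Lie$. Writing the momentum as a curve $P(t) \in \Lie$ and the left-logarithmic derivative of the geodesic as $\xi(t) = \gamma(t)^{-1}\gamma'(t)$, the sub-Riemannian Hamiltonian becomes $H(P) = \tfrac12 \lVert P_{\Ho}\rVert^2$, where $P_{\Ho}$ denotes the orthogonal projection of $P$ onto $\Ho$. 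Differentiating $H$ in the fiber gives the horizontality condition $\xi = P_{\Ho}$, so $\gamma$ is automatically a horizontal curve.

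Next I would write the reduced (Euler--Arnold) form of the momentum equation. Because the Killing form is $\Ad$-invariant, each operator $\ad Z$ is skew-symmetric, so the coadjoint action transported to $\Lie$ via the inner product is $P \mapsto [P,\xi]$, and the momentum equation reads $P'(t) = [P, P_{\Ho}]$. Decomposing $P = P_{\T} + P_{\Ho}$ and using $[P_{\Ho}, P_{\Ho}] = 0$, this simplifies to $P' = [P_{\T}, P_{\Ho}]$.

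The key structural input is the bracket relation $[\T, \Ho] \subseteq \Ho$: for $T \in \T$ the operator $\ad T$ preserves every root space $\Lie_{R}$, on which it acts as the scalar $i\langle R, T\rangle$, and it annihilates $\T$, so by the decomposition \eqref{eq:LieAlgebraDecomposition} it maps $\Ho$ into $\Ho$. Consequently $[P_{\T}, P_{\Ho}] \in \Ho$, and projecting the momentum equation onto $\T$ yields $P_{\T}' = 0$. Thus the torus component $A := P_{\T}$ is constant, and the horizontal momentum obeys the linear equation $P_{\Ho}' = [A, P_{\Ho}]$, whose solution is $P_{\Ho}(t) = \Ad(e^{tA})\,P_{\Ho}(0)$; this remains in $\Ho$ because $\Ad(e^{tA}) = e^{t\,\ad A}$ preserves $\Ho$.

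Finally I would reconstruct $\gamma$ from the relation $\gamma^{-1}\gamma' = P_{\Ho}(t)$. Setting $X := P(0)$, so that $X^{\perp} = A$ is the orthogonal projection of $X$ onto $\T$, I would verify by direct differentiation that $\gamma(t) = e^{tX}\, e^{-tX^{\perp}}$ satisfies $\gamma^{-1}\gamma' = \Ad(e^{tA})(X - A) = P_{\Ho}(t)$, using that $A$ commutes with $e^{tA}$ and $X$ commutes with $e^{tX}$. Since every initial momentum $P(0) \in \Lie^* \cong \Lie$ is realized by some $X \in \Lie$, and Hamilton's equations have unique solutions for each initial covector, curves of this form exhaust the normal geodesics through the identity. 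The step I expect to require the most care is the \emph{correct reduction of Hamilton's equations to the Euler--Arnold form with the right signs}; once that is in place, the conservation of $P_{\T}$ that drives the entire computation rests precisely on the relation $[\T,\Ho]\subseteq\Ho$ furnished by the root-space decomposition.
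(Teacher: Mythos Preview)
The paper does not give its own proof of this proposition: it states the result as \cite[Theorem 11.8]{mont} adapted to the present setting (with an additional pointer to \cite{bosc1}), so there is no in-paper argument to compare against. Your proposal is a correct, self-contained derivation along the standard lines one finds in those references: left-trivialize $T^*\G$, write the Euler--Arnold momentum equation $P' = [P, P_{\Ho}]$, use $[\T,\Ho]\subseteq\Ho$ from the root-space decomposition to get conservation of $P_{\T}$, solve the resulting linear equation for $P_{\Ho}$, and then check directly that $\gamma(t)=e^{tX}e^{-tX^{\perp}}$ reconstructs the flow. The verification $\gamma^{-1}\gamma' = \Ad(e^{tX^{\perp}})(X-X^{\perp})$ is exactly the computation carried out later in the paper in \eqref{eq:GammaPrime}, so your final step aligns with what the authors actually use. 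Your own caveat about signs in the reduction is the only place to be careful, but since you ultimately \emph{verify} that the explicit curve satisfies the reconstruction equation, a sign slip upstream would be caught there.
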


\begin{definition}\label{defn:HorizR}
If $X\in \Ho$, then we call $\gamma_X (t) = e^{tX}$ a horizontal Riemannian geodesic.  
\end{definition}

These are precisely the Riemannian geodesics which are also sub-Riemannian.
As we will see, they can be regular or singular.

If $R \in {\mathcal R}^+$, then let us use the notation
$\Ho_R = (\Lie_{R} \oplus \Lie_{-R} ) \cap \Lie$. With this notation we can rewrite \eqref{eq:LieAlgebraDecomposition} as
\begin{equation}\label{eq:TPerp}
\Ho = \T^{\perp} =  \bigoplus_{R \in \proots} \, \Ho_R \, .
 \end{equation}
From the relations
$$[\Ho_R , \Ho_R ] = \spa \{ R \}  \; \; \text{and} \; \; [R , \Ho_R ] = \Ho_R \, ,$$
we conclude that
\begin{equation}\label{eq:su(2)_R}
\su (2)_R = \Ho_R \bigoplus \spa \{ R \}
\end{equation}
is a subalgebra of $\Lie$, isomorphic to $\su (2)$.

For each $T \in \T$ let
$${\mathcal R}^T = \{ R \in {\mathcal R}^+ \, : \; \langle R , T \rangle =0 \} \, ,$$
and
\begin{equation}\label{eq:Lie^T}
\Lie^T = \bigoplus_{R \in {\mathcal R}^T} \, \su(2)_R \, .
\end{equation}
If ${\mathcal R}^T \neq \emptyset$, then $\Lie^T$ is a nontrivial Lie subalgebra of $\Lie$ and therefore we can find a closed, connected subgroup $\G^T$ of $\G$, which has $\Lie^T$ as its Lie algebra. Note that $\G^T$ carries a sub-Riemannian geometry, for which the horizontal distribution is 
\begin{equation}\label{eq:Ho^T}
\Ho^T = \bigoplus_{R \in {\mathcal R}^T} \, \Ho_R \, .
\end{equation}
Therefore, horizontal curves in $\G^T$ are also horizontal in $\G$ and if a normal sub-Riemannian geodesic of $\G$ lies in $\G^T$, then it is a normal sub-Riemannian geodesic of $\G^T$ too. 

A characteristic subgroup for a singular sub-Riemannian geodesic $\gamma$ is a closed connected subgroup within which $\gamma$ is regular.

\begin{prop}\label{prop:GeodesicsInG^T} \cite[Theorem 2]{mont1}
Every singular sub-Riemannian geodesic of $\G$ lies in some characteristic subgroup $\G^T$ with dimension strictly less than the dimension of $\G$.
\end{prop}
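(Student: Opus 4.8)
The plan is to produce the subgroup $\G^T$ explicitly from an abnormal (Lagrange) multiplier attached to the singular geodesic, and then verify the containment and the dimension bound, invoking the general structural theorem only for the regularity claim. Since a singular geodesic is by definition a critical point of the endpoint map, it admits a nonzero abnormal extremal lift (see \cite{mont}). After left-trivializing $T^*\G \cong \G \times \Lie^*$ and using \eqref{eq:InnerProduct} to identify $\Lie^*$ with $\Lie$, this yields a nonzero curve $\lambda \colon [0,1] \to \Lie$ such that the control $u(t) = \gamma(t)^{-1}\gamma'(t) \in \Ho$ satisfies the annihilation condition $\langle \lambda(t), \Ho \rangle = 0$ together with the Lie--Poisson evolution $\dot\lambda = [\lambda, u]$.

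First I would extract the algebraic constraints on $\lambda$. The annihilation condition gives $\lambda(t) \in \Ho^{\perp} = \T$ for every $t$. Differentiating $\langle \lambda(t), X\rangle = 0$ for fixed $X \in \Ho$, substituting the evolution equation, and using that $\ad_u$ is skew-symmetric for the inner product \eqref{eq:InnerProduct}, yields $\langle [\lambda, u], X\rangle = 0$ for all $X \in \Ho$, so that $[\lambda, u] \perp \Ho$. On the other hand, $\ad_\lambda$ preserves the root-space decomposition and hence maps $\Ho$ into $\Ho$, so $[\lambda, u] \in \Ho$; combining the two forces $[\lambda, u] \equiv 0$. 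Feeding this back into $\dot\lambda = [\lambda, u]$ shows that $\lambda$ is a nonzero constant element, which I set to be $T := \lambda \in \T$.

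Next I would read off which root directions the control may use. Working in the basis \eqref{eq:HorizontalBasis}, a short computation shows that $\ad_T$ acts on each plane $\Ho_R$ as an infinitesimal rotation of angular speed $\langle R, T\rangle$, so that $\ker(\ad_T) \cap \Ho = \bigoplus_{R : \langle R, T\rangle = 0} \Ho_R = \Ho^T$ as in \eqref{eq:Ho^T}. The relation $[T, u(t)] = 0$ therefore says precisely that $u(t) \in \Ho^T$ for all $t$. Since $\Lie^T$ in \eqref{eq:Lie^T} is a subalgebra with horizontal part $\Ho^T$, the curve $\gamma$ is the integral curve of a (time-dependent) left-invariant field valued in $\Lie^T$ starting at $I$, and hence stays in the corresponding connected subgroup $\G^T$, where it is horizontal. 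Because $T \neq 0$ and the roots span $\T$, at least one root satisfies $\langle R, T\rangle \neq 0$, so ${\mathcal R}^T \subsetneq \proots$ and therefore $\dim \G^T < \dim \G$.

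It remains to see that $\gamma$ is in fact \emph{regular} inside $\G^T$, so that $\G^T$ genuinely qualifies as a characteristic subgroup; this is the main obstacle. A priori $\gamma$ could still be singular within $\G^T$, carrying a further abnormal multiplier lying in the Cartan subalgebra of $\Lie^T$ and orthogonal to some root of $\Lie^T$, which would push $\gamma$ into a still smaller subgroup. I would close this gap either by iterating the construction, which must terminate because of the strict dimension drop at each stage, and then choosing the minimal such $\G^T$, or by appealing directly to the general descent result \cite[Theorem 2]{mont1}, which guarantees that a singular geodesic lands in a characteristic subgroup of strictly smaller dimension; the content of the computation above is then that this characteristic subgroup may be taken of the form $\G^T$ with $T$ the constant abnormal multiplier. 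The containment and the dimension bound follow from the elementary computation, while this regularity/termination step is where the real work lies.
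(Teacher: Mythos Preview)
The paper does not supply its own proof of this proposition; it is simply quoted from \cite[Theorem~2]{mont1}. Your proposal is therefore a self-contained reconstruction of Montgomery's argument in the present setting rather than an alternative to anything in the paper, and it is essentially correct: the abnormal covector, after left-trivialization and identification via \eqref{eq:InnerProduct}, is forced to be a nonzero constant $T\in\T$; the relation $[T,u(t)]=0$ then confines the control to $\Ho^T$, the curve to $\G^T$, and $T\neq 0$ gives the strict dimension drop.

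Two minor remarks. First, for the iteration you sketch to close the regularity gap, you need the induced sub-Riemannian structure on $\G^T$ to again be of the same type, i.e.\ $\Ho^T$ should be the orthogonal complement of a Cartan subalgebra inside $\Lie^T$; this holds because $\Lie^T$ is semi-simple with Cartan subalgebra $\spa\{R:R\in\mathcal{R}^T\}$, and \eqref{eq:Lie^T}--\eqref{eq:Ho^T} exhibit $\Ho^T$ as its orthogonal complement. With that checked, the descent terminates in finitely many steps and yields a characteristic subgroup of the required form. Second, your appeal to \cite[Theorem~2]{mont1} as an alternative for the regularity step is a bit circular, since that is exactly the statement being proved; the iteration argument is the clean way to finish.
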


Propositions \ref{prop:AbnormalAreSingular} and \ref{prop:GeodesicsInG^T} allow us to give a simple algebraic proof of the following result, which is   also proved using control theoretic methods, including generalized Maslov index theory, in \cite{bosc1}.

\begin{theorem}\label{thm:SubRiemannianGeodesicsNormal}
Consider a semi-simple, connected, compact Lie group $\G$ endowed with the horizontal distribution defined by the orthogonal complement $\Ho$ of a Cartan subalgebra  $\T$ and inner product \eqref{eq:InnerProduct}.
Then we have the following results.\\
(i) All sub-Riemannian geodesics are normal.\\
(ii) All sub-Riemannian geodesics through the identity have the form
$$\gamma (t) = e^{tX} \cdot e^{- tX^{\perp}} \, ,$$
where $X \in \Lie$ and $X^{\perp}$ is the orthogonal projection of $X$ onto $\T$.
\end{theorem}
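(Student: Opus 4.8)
\section*{Proof proposal}

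The plan is to reduce everything to the two cited dichotomies---regular versus singular (Proposition~\ref{prop:AbnormalAreSingular}) and the explicit form of normal geodesics (Proposition~\ref{prop:GeodesicsForm})---and to propagate normality upward from a lower-dimensional characteristic subgroup. Since the metric and the distribution are both left-invariant ($\Ho_g = g\Ho$), left translation is an isometry preserving horizontality, so it suffices to treat a geodesic $\gamma$ passing through the identity; this also allows me to invoke Proposition~\ref{prop:GeodesicsForm}, which is stated at the identity.

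First I would dispose of part (i). Let $\gamma$ be a sub-Riemannian geodesic through $I$. If $\gamma$ is regular, then Proposition~\ref{prop:AbnormalAreSingular} gives at once that $\gamma$ is normal. If instead $\gamma$ is singular, Proposition~\ref{prop:GeodesicsInG^T} provides a characteristic subgroup $\G^T$, with $\dim \G^T < \dim \G$, inside which $\gamma$ lies and---by the very definition of characteristic subgroup---inside which $\gamma$ is \emph{regular}. I would then check that $\gamma$ is genuinely a sub-Riemannian geodesic of $\G^T$: every $\Ho^T$-horizontal curve in $\G^T$ joining the same endpoints is an $\Ho$-horizontal curve in $\G$ of equal length, so a local minimizer in $\G$ that happens to lie in $\G^T$ is a fortiori a local minimizer in $\G^T$. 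Regularity in $\G^T$ together with Proposition~\ref{prop:AbnormalAreSingular} (which is valid for an arbitrary sub-Riemannian structure) then yields that $\gamma$ is normal in $\G^T$.

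Next I would transport the conclusion back to $\G$. Applying Proposition~\ref{prop:GeodesicsForm} inside $\G^T$ gives $\gamma(t) = e^{tX}\, e^{-tX^{\perp_T}}$ for some $X \in \Lie^T$, where $X^{\perp_T}$ is the orthogonal projection onto the Cartan subalgebra $\T^T = \spa\{R : R \in \roots^T\}$ of $\Lie^T$. The decomposition $\Lie^T = \Ho^T \oplus \T^T$ sits orthogonally inside $\Lie = \Ho \oplus \T$, with $\Ho^T \subset \Ho$ and $\T^T \subset \T$; hence for $X \in \Lie^T$ the $\T$-component and the $\T^T$-component coincide, that is, $X^{\perp_T} = X^{\perp}$. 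Thus $\gamma(t) = e^{tX}\, e^{-tX^{\perp}}$ with $X \in \Lie^T \subset \Lie$, and by the parametrization in Proposition~\ref{prop:GeodesicsForm} every curve of this shape is a normal geodesic of $\G$, so $\gamma$ is normal in $\G$. This proves (i). Part (ii) is then immediate: any geodesic through $I$ is normal by (i), and a second appeal to Proposition~\ref{prop:GeodesicsForm} puts it in the stated form.

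The step I expect to be the main obstacle is verifying that $\G^T$ really satisfies the hypotheses of Proposition~\ref{prop:GeodesicsForm}. I must confirm that $\Lie^T$ is semi-simple (not merely reductive) and that $\T^T$ is a genuine Cartan subalgebra of $\Lie^T$, so that $\Ho^T$ is the orthogonal complement of a Cartan subalgebra within $\Lie^T$, and that the sub-Riemannian metric on $\G^T$ is the restriction of the one on $\G$---the subtlety being that the Killing form of $\Lie^T$ need not equal the restriction of the Killing form of $\Lie$, whereas the inner product relevant to the horizontal distribution $\Ho^T$ is exactly the restriction of $\langle\,\cdot\,,\cdot\,\rangle$. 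Once this structural compatibility is in place, the projection identity $X^{\perp_T} = X^{\perp}$ and the characterization in Proposition~\ref{prop:GeodesicsForm} close the argument.
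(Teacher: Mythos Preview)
Your argument follows the same route as the paper's: assume the geodesic is singular, pass to the characteristic subgroup $\G^T$ from Proposition~\ref{prop:GeodesicsInG^T}, use regularity there with Proposition~\ref{prop:AbnormalAreSingular} to conclude normality in $\G^T$, write the geodesic in the exponential form via Proposition~\ref{prop:GeodesicsForm}, and observe that this very form is a normal geodesic of $\G$. You are more explicit than the paper about the projection identity $X^{\perp_T}=X^{\perp}$ and about why a $\G$-geodesic lying in $\G^T$ is a $\G^T$-geodesic, and the concern you flag (that $\Lie^T$ must be semi-simple with Cartan subalgebra $\T^T$ and a compatible bi-invariant inner product so that Proposition~\ref{prop:GeodesicsForm} applies there) is exactly the structural point the paper handles implicitly through \eqref{eq:TPerp}--\eqref{eq:Ho^T}.
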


\begin{proof}
Let us assume that $\gamma$ is an abnormal sub-Riemannian geodesic of $\G$. Then, by Proposition \ref{prop:AbnormalAreSingular}, $\gamma$ is singular and by Proposition \ref{prop:GeodesicsInG^T}, there exists $T \in \T$ such that $\gamma$ lies in a characteristic subgroup $\G^T$. But, as $\gamma$ is regular in $\G^T$, 
by  Proposition \ref{prop:AbnormalAreSingular} it is also normal in $\G^T$. Hence,
 $\gamma$ must have the form \eqref{eq:SubRiemannianGeodesics} in $\G^T$, which, by \eqref{eq:TPerp}--\eqref{eq:Ho^T}, gives a normal sub-Riemannian geodesic of $\G$.

Once all sub-Riemannian geodesics are normal, part (ii) is a direct consequence of Proposition \ref{prop:GeodesicsForm}. 
\end{proof}

\section{Lengths of sub-Riemannian geodesic loops}\label{sec:3LengthsSR}
\noindent In this section we assume that $\G$ is a simple, simply connected, compact matrix Lie group.

For each root $R \in \roots$ and $n \in {\mathbb Z}$ we define the hyperplane in $\T$:
\begin{equation}\notag \label{eq:Hyperplane}
P(R , 2\pi n) = \{ T \in \T \, : \; \langle R , T \rangle = 2\pi n \} \, .
\end{equation} 
The reflections in $\T$ across  the hyperplanes $P(R, 0)$ will be denoted by $r_{R}$.
Note that 
$$r_{R} (T) = T - \frac{2 \langle R , T \rangle}{\lVert R\rVert^2} \, R \, .$$
The Weyl group of $\G$ can be defined as the group $W$ generated by the reflections $\{ r_{R} \, : \; R \in \roots \} \, .$ 

The set
$$\T \setminus \bigcup_{R \in \roots} P(R , 0) $$ is a union of disjoint, open cones, called Weyl  chambers. The Weyl group acts transitively on the Weyl chambers. We define the positive Weyl chamber by
\begin{equation}\notag
C = \{ T \in \T \, : \; \langle R , T \rangle > 0 \, , \; \forall \;   R \in \proots \} \, ,
\end{equation}
and let $\overline{C}$ denote its closure.

Let us choose the simple roots $\roots_s = \{ R_1, \dots , R_m \}$. 
In the case of a simple Lie algebra, the root system is irreducible and the length of the roots can take at most 2 values, which implies that the entries of the Cartan matrix,
$$N(R_j, R_k) = \frac{2 \langle R_j , R_k \rangle}{\lVert R_k \rVert^2} \, ,$$
can take only the following values:
$$N(R_j , R_k ) = \left\{ \begin{array} {l} 2 \; \text{if} \; \; j = k\\
0, -1 \; \text{and at most one of} \, -2 \, \text{or} \, -3  \, \text{if} \; j \neq k \, .\end{array}  \right.$$ 
For each $R \in \roots$ we denote by 
\begin{equation}\label{eq:P_R}
P_R = \frac{2\pi R}{\lVert R\rVert^2}
\end{equation}
the orthogonal projection of the origin onto the hyperplane $P(R , 2\pi)$.
It is known that \cite[Chapter 7, Lemma 7.6]{hel}
\begin{equation}\label{eq:e^2P_R=I}
e^{2P_R} = I \, , \: \forall \; R \in \roots \, .
\end{equation}

The unit lattice in $\T$ is defined by
\begin{equation}\notag \label{eq:UnitLattice}
\La_{\T} = \{ T \in \T \, : \; e^T = I \} \, ,
\end{equation}
and let us also set
\begin{equation}\notag \label{eq:Z_T}
{\mathcal Z}_{\T} = \left\{ n_1 2P_{R_1} +  \cdots + n_m 2P_{R_m} \, : \; n_1, \dots , n_m \in {\mathbb Z} \right\}\, .
\end{equation}
By the commutativity of $\T$, it is evident that ${\mathcal Z}_{\T} \subset \La_{\T}$.
By \cite[Theorem IX.1.6]{simon} we know that $\La_{\T} /\ {\mathcal Z}_{\T} \cong \pi_1 (\G)$, the fundamental group of $\G$.  Since $\G$ is simply connected, it follows that
\begin{equation}\label{L=Z}
\La_{\T} = {\mathcal Z}_{\T} \, .
\end{equation}
It is also known that \cite[Theorem IX.1.4]{simon}
\begin{equation}\label{eq:L_TSubset}
\La_{\T} \subseteq \{ T \in \T \, : \; \langle R , T \rangle \in 2\pi {\mathbb Z} \, , \; \forall \; R \in \roots \} \, ,
\end{equation}
and the two sets in \eqref{eq:L_TSubset} are equal only if the center of $\G$ equals $\{ I \}$.

\begin{definition} We call the numbers $n_1, \dots ,  n_m \in {\mathbb N} \cup \{0\}$ 
relatively prime if at least one of the numbers is non-zero and the greatest common factor  of the non-zero numbers is $1$. In particular, if we have only one non-zero number, then it must be $1$.
\end{definition}

\begin{remark}\label{prime}
By \eqref{L=Z}, if the numbers $n_1, \dots ,  n_m \in {\mathbb N} \cup \{0\}$ 
are relatively prime, then the line segment joining the origin to $n_1 2P_{R_1} +  \cdots + n_m 2P_{R_m} $ intersects $\La_{\T}$ only at the endpoints.
\end{remark}

\begin{theorem}\label{thm:RGeodesics/RPrime} Let $\G$ be a simple, simply connected, compact Lie group endowed with the bi-invariant inner product \eqref{eq:InnerProduct}.\\
(a) If the numbers $n_1, \dots ,  n_m \in {\mathbb N} \cup \{0\}$ are relatively prime and $T = n_1 2P_{R_1} + \cdots + n_m 2P_{R_m} $, then $\gamma_T (t)= e^{tT}$, $0 \leq t \leq 1$,  is a Riemannian geodesic loop with length $$\lVert n_1 2P_{R_1} +  \cdots + n_m 2P_{R_m} \rVert \, .$$
(b) All Riemannian geodesic loops in $\G$ have lengths
$$\lVert n_1 2P_{R_1} +  \cdots + n_m 2P_{R_m} \rVert \, ,$$ where $n_1, \dots ,  n_m \in {\mathbb N} \cup \{0\}$ are relatively prime.
\end{theorem}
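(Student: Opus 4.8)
The plan is to characterize Riemannian geodesic loops through the identity and connect their lengths to the lattice structure already established. Since all Riemannian geodesics through $I$ have the form $\gamma(t) = e^{tX}$ for $X \in \Lie$, the curve $\gamma_X$ restricted to $[0,1]$ is a loop precisely when $e^X = I$, i.e.\ when $X \in \La_{\T}'$, the full unit lattice of $\G$ (here I must be careful: a priori $X$ need not lie in $\T$). The first key observation is that any $X \in \Lie$ with $e^X = I$ is conjugate to an element of the Cartan subalgebra $\T$: by the maximal-torus theorem, $\Ad(g)$ carries $X$ into $\T$ for some $g \in \G$, and since conjugation is an isometry of the bi-invariant metric, $\gamma_X$ and $\gamma_{\Ad(g)X}$ have the same length. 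Thus, for the purpose of computing the length spectrum, I may assume $X \in \T$, and the relevant set is exactly $\La_{\T}$.

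\textbf{For part (a),} I would take $T = n_1\, 2P_{R_1} + \cdots + n_m\, 2P_{R_m}$ with the $n_j$ relatively prime. By \eqref{L=Z}, $\La_{\T} = {\mathcal Z}_{\T}$, so $T \in \La_{\T}$ and hence $e^T = I$; this shows $\gamma_T(1) = I$, so $\gamma_T$ is a closed curve. To verify it is a \emph{loop}, i.e.\ that $\gamma_T(t) \neq I$ for $t \in (0,1)$, I would use Remark~\ref{prime}: the relative primality of the $n_j$ guarantees that the open segment from the origin to $T$ meets $\La_{\T}$ only at its endpoints, so $e^{tT} = I$ forces $tT \in \La_{\T}$ and therefore $t \in \{0,1\}$. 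Since $\gamma_T(t) = e^{tT}$ is a one-parameter subgroup in the direction $T \in \T \subset \Lie$, it is a Riemannian geodesic by the stated form of geodesics, and its length is $\int_0^1 \lVert T \rVert\, dt = \lVert T \rVert = \lVert n_1\, 2P_{R_1} + \cdots + n_m\, 2P_{R_m}\rVert$.

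\textbf{For part (b),} conversely, let $\gamma$ be any Riemannian geodesic loop in $\G$. Translating to the identity by left-invariance, I may assume $\gamma$ passes through $I$, so $\gamma(t) = e^{tX}$ with $e^X = I$ and $X \neq 0$. By the conjugacy reduction above, $X$ (up to an isometry preserving length) lies in $\La_{\T} = {\mathcal Z}_{\T}$, so $X = k_1\, 2P_{R_1} + \cdots + k_m\, 2P_{R_m}$ for some integers $k_j \in {\mathbb Z}$. Writing $d = \gcd$ of the nonzero $|k_j|$ and factoring $X = d\, T_0$ with $T_0 = n_1\, 2P_{R_1} + \cdots + n_m\, 2P_{R_m}$ having relatively prime nonnegative coefficients (absorbing signs via the symmetry $P_{-R} = -P_R$ together with $-R \in \roots$), the loop condition $\gamma(t) \neq I$ on $(0,1)$ forces $d = 1$: otherwise $\gamma(1/d) = e^{T_0} = I$ with $1/d \in (0,1)$, contradicting the loop property. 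Hence $X = T_0$ has relatively prime coefficients, and $\Le(\gamma) = \lVert X \rVert$ is of the asserted form.

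\textbf{The main obstacle} I anticipate is the reduction to the Cartan subalgebra: justifying rigorously that an arbitrary loop direction $X \in \Lie$ with $e^X = I$ can be replaced, without changing the length, by an element of $\La_{\T}$. This rests on the conjugacy of maximal tori (every element of $\G$, and hence every closed one-parameter subgroup, is conjugate into $\To$) together with the $\Ad$-invariance of the Killing form, which makes conjugation a length-preserving isometry. A secondary subtlety is the sign bookkeeping when passing from integer coefficients $k_j \in {\mathbb Z}$ to relatively prime \emph{nonnegative} coefficients $n_j \in {\mathbb N} \cup \{0\}$; this should follow from the Weyl-group symmetry and the relation $2P_{-R} = -2P_R$, but it requires checking that the resulting vector still lies in ${\mathcal Z}_{\T}$ and that its norm is unchanged.
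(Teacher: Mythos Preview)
Your plan matches the paper's proof closely; part (a) is identical in substance. For part (b), the paper handles your ``secondary subtlety'' in one stroke: rather than conjugating $X$ merely into $\T$, it conjugates $X$ all the way into the closed positive Weyl chamber $\overline{C}\subset\T$ (using that the Weyl group acts transitively on chambers and that each Weyl element is realized as some $\Ad(g)$). The resulting $T_X\in\La_{\T}\cap\overline{C}$ then has \emph{nonnegative} integer coefficients in the basis $\{2P_{R_j}\}$ of simple coroots, and the loop condition---preserved under conjugation because $e^{t\,\Ad(g)X}=g\,e^{tX}g^{-1}$---forces those coefficients to be relatively prime. Your proposed sign absorption via $P_{-R}=-P_R$ does not finish the job as written: rewriting $k_j\,2P_{R_j}$ as $|k_j|\,2P_{-R_j}$ leaves the vector (and hence its norm) unchanged, but $-R_j$ is not among the chosen simple roots, so the expression is no longer of the form $\sum n_i\,2P_{R_i}$ with the fixed simple $R_i$; and simply replacing $k_j$ by $|k_j|$ in the original basis changes the vector and, since the $2P_{R_j}$ are not orthogonal, generally changes the norm. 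The Weyl-group symmetry you flag at the end is exactly the correct fix, and it is precisely what the paper invokes.
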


\begin{proof} (a) If the rank of $\G$ is one, then $\To = \U (1)$ and any geodesic loop in $\To$ has length $2\pi$.
Now suppose the rank of $\G$ is greater than or equal to two.
Let $T = n_1 2P_{R_1} + \cdots + n_m 2P_{R_m} $, where $n_1 , \dots , n_m \in {\mathbb N} \cup \{0\}$ are relatively prime and $\gamma_T (t) = e^{tT}$. By the commutativity of the elements of $\T$ we know that $\gamma_T (1) = I$. If, for some $0<t<1$, we have $\gamma_T (t)=I$, then
$ t (n_1 2P_{R_1} + \cdots + n_m 2P_{R_m} ) \in \La_{\T}$ and, by Remark \ref{prime}, this contradicts the fact that 
$n_1 , \dots , n_m$ are relatively prime.
Hence, the length of one loop described by $\gamma_T$ is 
\begin{equation}\label{eq:Length(gamma_T)1}
\Le (\gamma_T ) = \int_0^1 \, \lVert T \Vert \, dt = \lVert n_1 2P_{R_1} + \cdots + n_m 2P_{R_m} \rVert \, .
\end{equation}
(b) Let $X \in \Lie$ and $\gamma_X (t) = e^{tX}$. Assume that
$\gamma_X (1) = I$ and $\gamma_X (t) \neq I$ if $0<t<1$. By the facts that $\Ad (\G )(X) \cap \T$ is non-empty and finite, the Weyl group acts transitively on the Weyl chambers, and each element of the Weyl group can be written as $\Ad (g)$ for some $g \in \G$, it follows that there exists $g \in \G$ such that $T_X = \Ad (g)X \in \overline{C}$. 
Hence, $e^{T_X} =ge^Xg^{-1} = I$ and therefore $T_X \in \La_{\T}$.  By \eqref{L=Z} we obtain that
$T_X = n_1 2P_{R_1} + \cdots + n_m 2P_{R_m} $, where $n_1 , \dots , n_m \in {\mathbb N} \cup \{0\}$ are relatively prime. Using the fact that $\lVert T_X \rVert = \lVert X \rVert$ we find that 
\begin{equation}\notag \label{eq:Length(gamma_X)}
\Le (\gamma_X ) = \int_0^1 \, \lVert T_X \rVert \, dt =\lVert n_1 2P_{R_1} + \cdots + n_m 2P_{R_m} \rVert \, .
\end{equation}

\end{proof}

\begin{remark}
Moreover, for any $0\neq T = n_1 2P_{R_1} + \cdots + n_m 2P_{R_m} $ we have that  $\Ad (\G )(T) \cap (\Lie \setminus \T) \neq \emptyset $, so there exists $X \not\in \T$ in the same conjugacy class with $T$. Hence we have a  Riemannian geodesic loop outside of $\To$, corresponding to $X$, which has length equal to $\lVert T \rVert$ in \eqref{eq:Length(gamma_T)1}.
\end{remark}

We need the following lemma to generalize Theorem \ref{thm:RGeodesics/RPrime} to the case of horizontal Riemannian geodesic loops (see Definition \ref{defn:HorizR}). 

\begin{lemma}
For any $T \in \T$ we have $\Ad (\G )(T) \cap \Ho \neq \emptyset $.
\end{lemma}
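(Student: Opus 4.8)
The plan is to restate the lemma as a statement about orthogonal projections. Let $p\colon\Lie\to\T$ be the orthogonal projection associated with $\Lie=\T\oplus\Ho$; since $\Ho=\T^{\perp}$, the conclusion $\Ad(g)(T)\in\Ho$ is equivalent to $p(\Ad(g)(T))=0$. Hence it suffices to show that $0$ lies in the set $\Sigma:=\{\,p(\Ad(g)(T))\;:\;g\in\G\,\}\subseteq\T$. First I would record the basic features of $\Sigma$: it is compact and connected, being the continuous image of the connected compact group $\G$, and it is invariant under the Weyl group $W$. Moreover, since every element of $W$ is realized as $\Ad(g)$ for some $g$ normalizing $\To$, and such $g$ preserve $\T$, the full Weyl orbit $W\cdot T$ is contained in $\Sigma$.

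Next I would locate the origin in the convex hull of $\Sigma$. Consider the average $\bar T=\frac{1}{|W|}\sum_{w\in W}w(T)$. It is fixed by $W$, so $\langle\bar T,R\rangle=0$ for every root $R$; as the roots span $\T$ (recall $\G$ is simple), this forces $\bar T=0$. Being an equally weighted convex combination of the points of $W\cdot T$, this shows $0\in\mathrm{conv}(W\cdot T)$.

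The remaining and decisive step is to promote membership in the convex hull to membership in $\Sigma$ itself, i.e.\ to establish $\Sigma=\mathrm{conv}(W\cdot T)$, which is precisely Kostant's linear convexity theorem. The inclusion $\Sigma\subseteq\mathrm{conv}(W\cdot T)$ is elementary, but the reverse inclusion — that the projected orbit fills out the entire polytope — is the substantive content, and I expect it to be the main obstacle; here I would either cite Kostant or reproduce his argument. As a concrete source of the needed motion, note that within each subalgebra $\su(2)_R$ one can rotate the $R$-direction into $\Ho$: if $X\in\Ho_R$ and $Y\in\Ho_R$ is its partner with $[R,X]=-\lVert R\rVert^2 Y$, then $\Ad(e^{sX})(R)=\cos(s\lVert R\rVert)\,R+\lVert R\rVert\sin(s\lVert R\rVert)\,Y$, which lies in $\Ho$ at $s=\pi/(2\lVert R\rVert)$. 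Since $\Ad(e^{sX})$ fixes the component of $T$ orthogonal to $R$, this traces inside $\Sigma$ the segment from $T$ to its orthogonal projection onto the hyperplane $\langle R,\cdot\rangle=0$, and hence, reflecting, the full segment $[T,r_R(T)]\subseteq\Sigma$ for every root $R$. Iterating these rank-one rotations does not obviously terminate at $0$, however, because the horizontal component accumulated at each step re-enters $\T$ under the next conjugation; controlling this coupling is exactly what the convexity theorem circumvents.
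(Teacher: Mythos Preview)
Your argument via Kostant's linear convexity theorem is correct: the projected orbit $\Sigma=p(\Ad(\G)(T))$ coincides with $\mathrm{conv}(W\cdot T)$, and the Weyl average of $T$ is a $W$-fixed element of $\T$, hence zero because the roots span $\T$ when $\G$ is simple. The paper, however, takes a much shorter and quite different route. It cites a lemma of D'Andrea and Maffei to the effect that, given $\T$, one can construct a second Cartan subalgebra $\T'$ orthogonal to $\T$; then $\T'\subset\Ho$, and since any two Cartan subalgebras are $\Ad$-conjugate there exists a single $g\in\G$ with $\Ad(g)\T=\T'\subset\Ho$, so $\Ad(g)T\in\Ho$ for every $T\in\T$ at once.

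The trade-offs are these. Your approach is self-contained modulo Kostant's theorem, a standard and widely cited result, whereas the paper's proof rests on a less familiar external lemma. On the other hand, the paper's argument yields the stronger uniform statement that one $g$ works for all $T\in\T$ simultaneously, which your method does not give directly since the $g$ realizing $p(\Ad(g)(T))=0$ a priori depends on $T$. The $\su(2)$-rotation sketch at the end of your proposal is essentially a first step toward reproving Kostant's theorem; as you already note, it does not close on its own, so it is best to simply invoke Kostant and omit that paragraph.
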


\begin{proof}
By \cite[Lemma 2.2]{dand}, given $\T$,  we can construct another Cartan subalgebra $\T'$ which is orthogonal to $\T$. Hence, $\T' \subset \Ho$ and, as any two Cartan subalgebras are conjugate,  there exists some $g \in \G$ such that $\Ad (g) \T = \T'$. Hence, we conclude that for any $T \in \La_{\T}$ we have that  $\Ad (\G )(T) \cap \Ho \neq \emptyset $.
\end{proof}

\begin{theorem}\label{thm:PurelySubRLengths}  
Consider a semi-simple, connected, compact Lie group $\G$ endowed with horizontal distribution defined by the orthogonal complement $\Ho$ of a Cartan subalgebra $\T$, and inner product \eqref{eq:InnerProduct}. Then the horizontal Riemannian geodesic loops have lengths
$$\lVert n_1 2P_{R_1} +  \cdots + n_m 2P_{R_m} \rVert \, ,$$ where $n_1, \dots ,  n_m\in {\mathbb N} \cup \{0\}$ are relatively prime.
\end{theorem}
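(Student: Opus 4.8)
The plan is to establish the claimed length set by proving two inclusions, comparing the horizontal Riemannian geodesic loops against the explicit list already obtained in Theorem \ref{thm:RGeodesics/RPrime} for \emph{all} Riemannian geodesic loops. The only genuinely new ingredient is the Lemma just proved: it lets me move a geodesic loop lying inside the maximal torus into the horizontal distribution $\Ho$ without changing its length, via an $\Ad$-conjugation.

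For the inclusion ``$\subseteq$'' I would simply observe that a horizontal Riemannian geodesic loop $\gamma_X(t)=e^{tX}$ with $X\in\Ho$ is, in particular, a Riemannian geodesic loop in $\G$. Hence Theorem \ref{thm:RGeodesics/RPrime}(b) forces its length to equal $\lVert n_1 2P_{R_1}+\cdots+n_m 2P_{R_m}\rVert$ for some relatively prime $n_1,\dots,n_m$. This direction needs no further work beyond citing the earlier theorem.

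The substance lies in the reverse inclusion ``$\supseteq$''. Given relatively prime $n_1,\dots,n_m$, I set $T=n_1 2P_{R_1}+\cdots+n_m 2P_{R_m}$; by Theorem \ref{thm:RGeodesics/RPrime}(a), the curve $\gamma_T(t)=e^{tT}$ is a Riemannian geodesic loop of length $\lVert T\rVert$. Applying the Lemma to $T$ produces $g\in\G$ with $X:=\Ad(g)T\in\Ho$. I then consider $\gamma_X(t)=e^{tX}=e^{t\,\Ad(g)T}=g\,e^{tT}g^{-1}=g\,\gamma_T(t)\,g^{-1}$. Since $X\in\Ho$, the curve $\gamma_X$ is a horizontal Riemannian geodesic in the sense of Definition \ref{defn:HorizR}. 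Because conjugation $h\mapsto ghg^{-1}$ is a diffeomorphism of $\G$ fixing $I$, the loop conditions $\gamma_T(0)=\gamma_T(1)=I$ and $\gamma_T(t)\neq I$ on $(0,1)$ transfer verbatim to $\gamma_X$. Finally, $\Ad(g)$ is unitary for the inner product \eqref{eq:InnerProduct}, so $\lVert X\rVert=\lVert\Ad(g)T\rVert=\lVert T\rVert$, and $\gamma_X$ realizes the required length as a horizontal loop.

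I expect the only delicate point to be verifying that the conjugated curve remains a bona fide geodesic \emph{loop}, i.e.\ that primitivity (avoiding the identity on the open interval) is preserved; this follows cleanly because $h\mapsto ghg^{-1}$ is injective and sends $I$ to $I$. Everything else reduces to the $\Ad(g)$-invariance of the metric and the fact, recorded in Definition \ref{defn:HorizR}, that $e^{tX}$ is a horizontal Riemannian geodesic whenever $X\in\Ho$. In this way the Lemma converts the torus computation of Theorem \ref{thm:RGeodesics/RPrime} into the horizontal statement with no extra length bookkeeping.
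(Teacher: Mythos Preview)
Your proof is correct and follows essentially the same approach as the paper: both directions are handled by citing Theorem~\ref{thm:RGeodesics/RPrime} for the inclusion $\subseteq$ and by invoking the Lemma on $\Ad(\G)(T)\cap\Ho\neq\emptyset$ to realize every listed length by a horizontal loop. If anything, you are more explicit than the paper in verifying that conjugation preserves the primitive-loop condition and the length.
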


\begin{proof} Let $X \in \Ho$ and $\gamma _X (t) = e^{tX}$. If $\gamma_X (1) = I$ and
$\gamma_X (t) \neq I$ for all $0<t<1$, then we can follow the proof of Theorem \ref{thm:RGeodesics/RPrime} (b), to conclude that there exist $n_1 , \dots , n_m \in {\mathbb N} \cup \{0\}$ relatively prime  such that
$$\Le (\gamma_X ) = \lVert n_1 P_{R_1} + \cdots + n_m P_{R_m} \rVert \, .$$
By Lemma \ref{cor:RootMax}, the entire length spectrum of $\lVert n_1 P_{R_1} + \cdots + n_m P_{R_m} \rVert$, where $n_1 , \dots , n_m \in {\mathbb N} \cup \{0\}$ are relatively prime, is covered, and this finishes the proof.
\end{proof}

One might expect the shortest sub-Riemannian geodesic loops to be longer than their Riemannian counterparts.  Surprisingly, the following result,  which generalizes the Riemannian case of \cite[Chapter 7, Proposition 11.9]{hel}, proves otherwise.   

\begin{theorem}\label{thm:shortestlengths}
The shortest sub-Riemannian geodesic loops are also the shortest Riemannian geodesic loops.  Their common length is
 $\frac{4\pi}{\lVert {R^*} \rVert}$, where ${R^*}$ is the highest root. 
\end{theorem}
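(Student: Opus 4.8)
The plan is to establish two matching bounds. The common value $\tfrac{4\pi}{\lVert R^*\rVert}=\lVert 2P_{R^*}\rVert$ will be obtained from a lattice computation; the upper bound will be realized by a \emph{horizontal} Riemannian geodesic loop, which is at once a Riemannian and a sub-Riemannian geodesic loop (Definition \ref{defn:HorizR}), so that the same curves minimize in both settings; and the lower bound will rule out any shorter sub-Riemannian loop.

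First I would locate the shortest Riemannian geodesic loop. By Theorem \ref{thm:RGeodesics/RPrime} its length is the norm of a shortest nonzero vector of the unit lattice, because the relative primality of $n_1,\dots,n_m$ is exactly primitivity of $n_12P_{R_1}+\cdots+n_m2P_{R_m}$ in ${\mathcal Z}_{\T}=\La_{\T}$ (see \eqref{L=Z}). Writing $R^\vee=\tfrac{2R}{\lVert R\rVert^2}$ we have $2P_R=2\pi R^\vee$, so $\La_{\T}=2\pi Q^\vee$, where $Q^\vee$ is the lattice generated by the simple coroots. Two standard facts finish the identification: the highest root $R^*$ has maximal length among the roots, and the nonzero vectors of minimal length in the coroot lattice are precisely the short coroots, i.e.\ the coroots $R^\vee$ of the long roots $R$. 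Since $\lVert R^\vee\rVert=\tfrac{2}{\lVert R\rVert}$ is smallest exactly when $\lVert R\rVert$ is largest, the shortest vector of $Q^\vee$ is $R^{*\vee}$ (primitive, since coroots are), of norm $\tfrac{2}{\lVert R^*\rVert}$. Hence the shortest Riemannian geodesic loop has length $\lVert 2P_{R^*}\rVert=\tfrac{4\pi}{\lVert R^*\rVert}$, recovering \cite[Proposition 11.9]{hel}.

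For the sub-Riemannian upper bound, Theorem \ref{thm:PurelySubRLengths} shows this same value $\tfrac{4\pi}{\lVert R^*\rVert}$ is attained by a horizontal Riemannian geodesic loop, which is a sub-Riemannian geodesic loop; so the shortest sub-Riemannian loop is at most $\tfrac{4\pi}{\lVert R^*\rVert}$, realized by a curve that is simultaneously a shortest Riemannian loop. The remaining task is the matching lower bound. By Theorem \ref{thm:SubRiemannianGeodesicsNormal} a sub-Riemannian geodesic loop is $\gamma(t)=e^{tX}e^{-tX^\perp}$, with length $L=\lVert X-X^\perp\rVert$ and closing condition $e^X=e^{X^\perp}$. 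Conjugating $X$ into $\overline{C}$, say $\Ad(g)X=T_X$, gives $e^{T_X}=ge^{X^\perp}g^{-1}$, so $e^{T_X}$ and $e^{X^\perp}$ are conjugate, hence $W$-related: $T_X-wX^\perp=\ell\in\La_{\T}$ for some $w\in W$. As $\Ad(g)$ is orthogonal and $X^\perp\perp(X-X^\perp)$, this yields $L^2=\lVert T_X\rVert^2-\lVert X^\perp\rVert^2=2\langle T_X,\ell\rangle-\lVert\ell\rVert^2$, with $\ell\neq0$ for a nontrivial loop, so $\lVert\ell\rVert\ge\tfrac{4\pi}{\lVert R^*\rVert}$ by the lattice computation.

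The main obstacle is that this reduction is not yet decisive: a priori the cross term $2\langle T_X,\ell\rangle$ could be small. Indeed, imposing only the necessary condition $\ell\in\La_{\T}$ together with the convexity constraint that $X^\perp$ lie in the convex hull of $\{wT_X:w\in W\}$ (Kostant's theorem) already permits configurations with $L$ arbitrarily small; a direct computation in the $A_2$ lattice exhibits such spurious data. These must be excluded using the \emph{full} matrix identity $e^X=e^{X^\perp}$ — not merely the conjugacy of $e^{T_X}$ and $e^{X^\perp}$ — together with primitivity, $\gamma(t)\neq I$ for $0<t<1$, the goal being to force $\langle T_X,\ell\rangle\ge\lVert\ell\rVert^2$ and hence $L^2\ge\lVert\ell\rVert^2\ge\tfrac{16\pi^2}{\lVert R^*\rVert^2}$. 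I would carry this out through the structural description of the loops in Theorem \ref{thm:Xdecomposition}, which controls the components of $X$ in the root spaces and makes the subalgebras $\su(2)_R$ of \eqref{eq:su(2)_R} govern the extremal case; the one-parameter-subgroup computation there (as for $\SU(2)$) shows that a nonzero vertical part $X^\perp$ strictly lengthens the loop relative to $\tfrac{4\pi}{\lVert R\rVert}\ge\tfrac{4\pi}{\lVert R^*\rVert}$. Combining the two bounds gives the common minimal length $\tfrac{4\pi}{\lVert R^*\rVert}$, attained exactly by the horizontal Riemannian loops, which are thus the shortest loops in both geometries.
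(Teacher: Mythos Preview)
Your Riemannian computation via the coroot lattice is correct and arguably cleaner than the paper's route. The paper instead conjugates a candidate $T$ into $\overline{C}$, uses Helgason's fact that $P(R^*,2\pi)\cap\overline{C}\cap\La_{\T}=\emptyset$ to rule out $\langle R^*,T_1\rangle=2\pi$, and then observes that $\lVert T_1\rVert\le\lVert 2P_{R^*}\rVert$ is the distance from the origin to $P(R^*,4\pi)$, forcing $T_1=2P_{R^*}$. Your identification of the shortest lattice vector as a long-root coroot reaches the same conclusion more directly.

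For the sub-Riemannian lower bound there is a genuine gap in your proposal. You correctly derive $L^2=2\langle T_X,\ell\rangle-\lVert\ell\rVert^2$ with $\ell\in\La_{\T}\setminus\{0\}$ (this is essentially Corollary \ref{cor:Xdecomp2}) and correctly note that this alone does not force $L\ge\tfrac{4\pi}{\lVert R^*\rVert}$, since the cross term can be negative. But your proposed resolution---invoking Theorem \ref{thm:Xdecomposition} together with an unspecified $\su(2)_R$ reduction---is not an argument. Part (a) of Theorem \ref{thm:Xdecomposition} asserts the very bound in question and justifies it only by reference to Theorem \ref{thm:PurelySubRLengths}, which concerns \emph{horizontal Riemannian} loops, not general sub-Riemannian ones; parts (b) and (c) supply structural constraints on $X^\perp$ and on the spectrum of $X$ but no length inequality. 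Your final assertion that ``a nonzero vertical part $X^\perp$ strictly lengthens the loop relative to $\tfrac{4\pi}{\lVert R\rVert}$'' is precisely the missing step, and nothing in your sketch proves it.

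By way of comparison, the paper disposes of the sub-Riemannian lower bound in a single sentence: by Remark \ref{remark:Smooth} a sub-Riemannian geodesic loop is a smooth Riemannian curve of the same length, and the conclusion is then asserted. You have in fact put your finger on a real subtlety: such a curve is not in general a Riemannian \emph{geodesic}, so the minimum over Riemannian geodesic loops does not, on its face, bound it from below---short non-geodesic closed curves through $I$ certainly exist. Thus the paper's one-line step appears to share the gap you diagnosed. Your instinct that the full closing condition $e^X=e^{X^\perp}$ together with primitivity must be exploited is sound; what is missing from your proposal is an actual argument that does so.
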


\begin{proof} We first consider the Riemannian case.  Without loss of generality we can assume that the rank of $\Lie$ is greater than $1$. 
Let $\gamma^* (t) = e^{t \, 2P_{R^*} } \, .$
By \eqref{eq:e^2P_R=I} we know that $\gamma^* (1) = I$.  Moreover,  there exists $R \in \proots$ such that  
$$N(R, R^* ) = 2 \frac{\langle R, R^* \rangle}{\lVert R^* \rVert^2} = 1 \, .$$ Therefore, for any $0 < t <1$ we have
$$\langle {R} , t \, 2 P_{R^*}  \rangle = 2\pi t\, ,$$
which, by \eqref{eq:L_TSubset}, implies that $\gamma^* (t) \neq I$ if $ 0<t<1$.
Hence, the length of one loop described by $\gamma^*$ is 
\begin{equation}\notag \label{eq:Length(gamma^*)}
\Le (\gamma^* ) = \int_0^1 \, \lVert 2 P_{R^*} \rVert \, dt = \frac{4\pi}{\lVert {R^*} \rVert} \, .
\end{equation}
Let $T = n_1 2P_{R_1} + \cdots + n_m 2P_{R_m} $, where $n_1 , \dots , n_m \in {\mathbb N} \cup \{0\}$ are relatively prime and let $\gamma_T (t) = e^{tT}$.
Assume that $\gamma_T (t) \neq I$ if $0<t<1$ and 
$$\Le (\gamma_T ) \leq \Le (\gamma^* ) \, .$$
Hence, 
$$\lVert T \rVert \leq \frac{4\pi}{\lVert R^* \rVert} = \lVert 2 P_{R^*} \rVert \, .$$
As in the proof of Theorem \ref{thm:RGeodesics/RPrime}, by the fact that the Weyl group acts transitively on the Weyl chambers, there exist $g \in \G$ and $T_1 \in \overline{C}$ such that $T_1 = \Ad (g) T$.
Therefore, $e^{T_1} = I$ and hence $\langle R^* , T_1 \rangle = 2\pi n$ for some $n \in {\mathbb N}$. By \cite[Chapter 7, Theorem 6.1]{hel},
$$P(R^* , 2\pi) \cap \overline{C} \cap \La_{\T} = \emptyset \, ,$$
which implies that $n \neq 1$. On the other hand, $\lVert T_1 \rVert = \lVert T \rVert \leq \lVert 2 P_{R^*} \rVert$,
which is the shortest distance from the origin to $P(R^*, 4\pi)$. Therefore, $n=2$ and this implies that
$T_1 = 2 P_{R^*}$. In conclusion, we have $\Le (\gamma_T) = \frac{4\pi}{\lVert R^* \rVert}$,  which establishes the length of the shortest Riemannian geodesic loops. Note that this slight generalization of \cite[Chapter 7, Proposition 11.9]{hel} is proved differently here.

We now consider the sub-Riemannian case.  Theorem \ref{thm:PurelySubRLengths} implies that the shortest horizontal Riemannian geodesic loops have length equal to $\frac{4\pi}{\lVert {R^*}\rVert}$, which equals the length of the shortest Riemannian geodesic loops by the argument above. By Remark \ref{remark:Smooth} every sub-Riemannian geodesic is a smooth Riemannian curve of equal length, so we conclude that
$\frac{4\pi}{\lVert {R^*}\rVert}$ is the shortest length for any sub-Riemannian geodesic loop.
\end{proof}

Theorem \ref{thm:shortestlengths} implies the following corollary regarding the length of the highest root.

\begin{cor}\label{cor:RootMax}
We have
$$\lVert R^* \rVert = \max \frac{4\pi}{\lVert n_1 2 P_{R_1} + \cdots + n_m P_{R_m} \rVert }\, ,$$
where   $n_1, \dots ,  n_m \in {\mathbb N} \cup \{0\}$ are relatively prime.
\end{cor}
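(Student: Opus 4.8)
\section*{Proof proposal}

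The plan is to read this formula directly off Theorems \ref{thm:RGeodesics/RPrime} and \ref{thm:shortestlengths}, using only the elementary fact that $x \mapsto 4\pi/x$ reverses the order of the positive reals. First I would invoke Theorem \ref{thm:RGeodesics/RPrime}(b) to record that the set of \emph{all} lengths of Riemannian geodesic loops in $\G$ is exactly
$$
\left\{ \lVert n_1 2P_{R_1} + \cdots + n_m 2P_{R_m} \rVert \; : \; n_1, \dots , n_m \in {\mathbb N} \cup \{0\} \ \text{relatively prime} \right\} .
$$
Then Theorem \ref{thm:shortestlengths} identifies the smallest member of this set as the length of the shortest Riemannian geodesic loop, namely
$$
\min \, \lVert n_1 2P_{R_1} + \cdots + n_m 2P_{R_m} \rVert = \frac{4\pi}{\lVert R^* \rVert} .
$$

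Next I would simply solve this identity for $\lVert R^* \rVert$. Every norm appearing is strictly positive, and the minimum on the left is genuinely attained: the points $n_1 2P_{R_1} + \cdots + n_m 2P_{R_m}$ range over the nonzero elements of the discrete lattice ${\mathcal Z}_{\T} = \La_{\T}$, whose norms are bounded below by a positive constant, so the infimum is a minimum (and Theorem \ref{thm:shortestlengths} exhibits it explicitly via the loop $\gamma^*(t) = e^{t\,2P_{R^*}}$). Since $x \mapsto 4\pi/x$ is strictly decreasing on $(0,\infty)$, minimizing the denominator is the same as maximizing the quotient, so
$$
\lVert R^* \rVert = \frac{4\pi}{\min \lVert n_1 2P_{R_1} + \cdots + n_m 2P_{R_m} \rVert} = \max \frac{4\pi}{\lVert n_1 2P_{R_1} + \cdots + n_m 2P_{R_m} \rVert} ,
$$
which is precisely the asserted formula.

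There is no real obstacle here beyond bookkeeping; the one point deserving a sentence of care is the attainment of the extremum, which is what lets me write $\max$ rather than $\sup$. This is guaranteed because the explicit primitive loop $\gamma^*$ of Theorem \ref{thm:shortestlengths} corresponds, via \eqref{eq:e^2P_R=I} and \eqref{L=Z}, to a lattice point $2P_{R^*} \in {\mathcal Z}_{\T}$ with relatively prime coefficients (by Remark \ref{prime}), so some admissible tuple $(n_1,\dots,n_m)$ realizes the bound. I would close by emphasizing the conceptual content: the corollary recasts a geometric invariant — the length of the shortest loop — as a purely algebraic extremal problem over the lattice generated by the $2P_{R_j}$, thereby computing the length of the highest root with no direct appeal to the individual root lengths.
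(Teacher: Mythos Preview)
Your argument is correct and matches the paper's own treatment: the paper gives no separate proof, simply stating that the corollary follows from Theorem~\ref{thm:shortestlengths}, and your derivation just makes explicit the one-line inversion $\lVert R^*\rVert = 4\pi/\min\lVert\cdot\rVert = \max(4\pi/\lVert\cdot\rVert)$ together with the description of the length set from Theorem~\ref{thm:RGeodesics/RPrime}(b). The only minor quibble is that Remark~\ref{prime} as stated goes in the opposite direction (relatively prime $\Rightarrow$ no interior lattice points), but the converse you need is immediate from $\La_\T = {\mathcal Z}_\T$ being a lattice with basis $\{2P_{R_j}\}$, so the attainment claim is fine.
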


Regarding the sub-Riemannian geodesic loops which are not necessarily horizontal Riemannian, we have the following result. 

\begin{theorem}\label{thm:Xdecomposition}
Let $X = H + X^{\perp}$ such that $H \in \Ho$ and $X^{\perp} \in \T$. 
Consider $\gamma (t) = e^{tX} \cdot e^{-tX^{\perp}}$ and 
assume that  $\gamma (t) \neq I$ if $0<t<1$ and  $\gamma (1) = I$.
Then:\\
(a) The length of $\gamma $ satisfies
$$\Le (\gamma ) = \lVert H \rVert \geq \frac{4\pi}{\lVert R^* \rVert} \, ,$$
and there is an $X = H + X^{\perp}$ for which $\frac{4\pi}{\lVert R^* \rVert}$ is attained.\\ 
(b) We have
\begin{equation}\label{eq:H}
H = e^{X^{\perp} } \, H \, e^{-X^{\perp}} \, .
\end{equation} 
(c) If $$\Ad (\G ) (X^{\perp} ) \cap \T = \{ S_1, \dots , S_l \} ,$$ 
then for all $1 \leq j \leq l$ there exist $L_j \in \La_{\T}$ such that 
$$\Ad (\G ) (X ) \cap \T = \{ S_1+L_1, \dots , S_l+L_l \} \, .$$ 
\end{theorem}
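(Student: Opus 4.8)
The plan is to treat the three parts in order, with part (b) feeding into the proof of part (c). For part (a), I would first compute the velocity of $\gamma$ in left-translated coordinates. Writing $\gamma'(t) = Xe^{tX}e^{-tX^{\perp}} - e^{tX}X^{\perp}e^{-tX^{\perp}}$ and using that $X$ commutes with $e^{tX}$ while $X^{\perp}\in\T$ commutes with $e^{tX^{\perp}}$, one gets
\[
\gamma(t)^{-1}\gamma'(t) = \Ad(e^{tX^{\perp}})(X) - X^{\perp} = \Ad(e^{tX^{\perp}})(H),
\]
the last equality using $X = H + X^{\perp}$ and $\Ad(e^{tX^{\perp}})X^{\perp} = X^{\perp}$. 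Since each $\Ho_R$ is invariant under $\Ad(\T)$, this body velocity lies in $\Ho$, so $\gamma$ is horizontal; and since $\Ad$ acts unitarily on $\Lie$, we have $\lVert\gamma'(t)\rVert = \lVert H\rVert$, giving $\Le(\gamma) = \lVert H\rVert$. As $\gamma$ has the normal form of Proposition \ref{prop:GeodesicsForm} and is a loop by hypothesis, it is a sub-Riemannian geodesic loop, so Theorem \ref{thm:shortestlengths} yields $\lVert H\rVert \ge 4\pi/\lVert R^*\rVert$. For attainment I would take $X^{\perp} = 0$ and let $H\in\Ho$ be a shortest horizontal Riemannian geodesic loop, which exists and has length $4\pi/\lVert R^*\rVert$ by Theorems \ref{thm:PurelySubRLengths} and \ref{thm:shortestlengths}.

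For part (b), the loop condition $\gamma(1) = I$ reads $e^{X}e^{-X^{\perp}} = I$, i.e. $e^{X} = e^{X^{\perp}}$, so $\Ad(e^{X}) = \Ad(e^{X^{\perp}})$. Applying both sides to $X$ and using $\Ad(e^{X})X = X$ together with $\Ad(e^{X^{\perp}})X^{\perp} = X^{\perp}$, the $X^{\perp}$-parts cancel and I obtain $\Ad(e^{X^{\perp}})H = H$, which is \eqref{eq:H}.

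For part (c), set $z = e^{X^{\perp}} = e^{X}$. The key is to locate one element of $\Ad(\G)(X)\cap\T$ explicitly. Part (b) gives $\Ad(z)H = H$, so $e^{sH}$ commutes with $z$ for all $s$ and hence $H$ lies in the Lie algebra of the centralizer $Z_{\G}(z)$; since $X^{\perp}\in\T\subset Z_{\G}(z)$ as well, $X = H + X^{\perp}$ lies in that Lie algebra. Because $\G$ is simply connected, $Z_{\G}(z)$ is connected with maximal torus $\To$, so $X$ is conjugate within $Z_{\G}(z)$ to some $T_0\in\T$; the conjugating element $h$ centralizes $z$, whence $e^{T_0} = \Ad(h)e^{X} = hzh^{-1} = z = e^{X^{\perp}}$ and therefore $T_0 - X^{\perp}\in\La_{\T}$. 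Thus $T_0 = X^{\perp} + L_0$ with $L_0\in\La_{\T}$ and $T_0\in\Ad(\G)(X)\cap\T$.

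To finish, I would use that the intersection of an adjoint orbit with $\T$ is a single Weyl orbit, so $\Ad(\G)(X)\cap\T = W\cdot T_0$ and $\Ad(\G)(X^{\perp})\cap\T = W\cdot X^{\perp} = \{S_1,\dots,S_l\}$. Since $W$ preserves $\La_{\T}$, for each $w\in W$ we have $w(T_0) = w(X^{\perp}) + w(L_0)$ with $w(X^{\perp})$ equal to some $S_j$ and $w(L_0)\in\La_{\T}$; choosing for each $j$ a $w_j$ with $w_j(X^{\perp}) = S_j$ and setting $L_j = w_j(L_0)$ exhibits $S_j + L_j\in\Ad(\G)(X)\cap\T$, and these exhaust the orbit. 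The main obstacle is precisely the passage to $Z_{\G}(z)$: recognizing $H$ as tangent to the centralizer of $z$, exploiting its connectedness (from simple-connectivity) to conjugate $X$ into $\T$, and using that the conjugator fixes $z$ to force the lattice relation $T_0 - X^{\perp}\in\La_{\T}$. A secondary point requiring care is that the listing has exactly $l$ terms, i.e. that $j\mapsto S_j + L_j$ is a bijection; this reduces to comparing the stabilizers $W_{X^{\perp}}$ and $W_{T_0}$, and is where the genuinely non-horizontal hypothesis $X^{\perp}\neq 0$ is needed.
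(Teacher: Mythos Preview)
Your arguments for (a) and (b) are correct and essentially match the paper's, though your derivation of \eqref{eq:H} is slightly cleaner: the paper obtains it via the periodicity $\gamma(t+1)=\gamma(t)$ and hence $\gamma'(0)=\gamma'(1)$, while you go directly from $\Ad(e^X)=\Ad(e^{X^{\perp}})$. For attainment in (a) the paper instead exhibits a loop inside the subgroup $\SU(2)_{R^*}$; your choice $X^{\perp}=0$ with a shortest horizontal Riemannian loop is equally valid and simpler.

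For (c) your route diverges from the paper's. The paper does not pass through the centralizer $Z_{\G}(z)$ at all: it simply observes that $e^X=e^{X^{\perp}}$ forces $\Ad(\G)(e^X)\cap\To=\Ad(\G)(e^{X^{\perp}})\cap\To$, and that these two sets are exactly $\{e^{S'_j}\}$ and $\{e^{S_j}\}$ respectively (using that two torus elements are $\G$-conjugate iff they lie in the same $W$-orbit). Matching $e^{S_j}=e^{S'_j}$ after a permutation yields $S'_j-S_j\in\La_{\T}$ directly. Your centralizer argument is correct and more constructive---it actually names a specific $T_0\in\Ad(\G)(X)\cap\T$---but it imports the nontrivial fact that centralizers of torus elements in simply connected compact groups are connected, which the paper's argument avoids entirely.

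Your closing remark is the one genuine misstep. The hypothesis $X^{\perp}\neq 0$ is not part of the theorem, and it is not what resolves the possible stabilizer mismatch you flag. The paper handles the cardinality issue by taking $l$ equal to the number of Weyl chambers from the outset and allowing repetitions in both lists $\{S_j\}$ and $\{S'_j\}$ (it says so explicitly). With that convention both lists are indexed by $W$ itself, the pairing $w\mapsto(w\cdot X^{\perp},\,w\cdot T_0)$ is automatic, and no comparison of $W_{X^{\perp}}$ with $W_{T_0}$ is needed. If you keep your interpretation of $l$ as the number of \emph{distinct} $S_j$, the conclusion as stated can indeed fail (e.g.\ $X^{\perp}=0$, $H\neq 0$), so the repetition convention is the correct fix rather than an extra hypothesis.
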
 

\begin{proof}
(a) Note that  $\gamma (1) = I$ implies that $e^X = e^{X^{\perp}}$. Then,
\begin{equation}\label{eq:GammaPrime}
\begin{split}
\gamma'(t)  = & e^{tX} \cdot X \cdot e^{-tX^{\perp}} - e^{tX} \cdot X^{\perp} \cdot e^{-tX^{\perp}} \\ = & e^{tX} (X -X^{\perp} ) \, e^{-tX^{\perp}}  ,
\end{split}
\end{equation}
and
\begin{equation*}
\begin{split}
\lVert \gamma'(t) \rVert_{\gamma (t)} = & \lVert \gamma(t)^{-1} \cdot \gamma'(t)\rVert_{I} \\ = & \lVert e^{tX^{\perp}} \cdot (X - X^{\perp} )
\cdot e^{-tX^{\perp}}\rVert = \lVert X - X^{\perp} \rVert \, .
\end{split}
\end{equation*}
Hence,  the length of $\gamma$ is $\lVert X-X^{\perp} \rVert = \lVert H \rVert$.
The fact that $\lVert H \rVert \geq \frac{4\pi}{\lVert R^* \rVert}$ is an immediate consequence of 
Proposition \ref{thm:PurelySubRLengths}. 

Consider the simply connected Lie subgroup $\SU (2)_{R^*}$ of $\G$ which has its Lie algebra equal to $\su (2)_{R^*}$, and denote by $X^*$ and $Y^*$ those elements of \eqref{eq:HorizontalBasis} which, together with $R^*$, generate $\su (2)_{R^*}$.
The relations 
$$[R^*,  X^* ] = - \lVert R^* \rVert^2 Y^*   \; \text{and} \; [R^*,  Y^* ] =  \lVert R^* \rVert^2 X^* $$
show that the only positive root, and hence the highest root in $\su (2)_{R^*}$, is $R^*$. In a similar way to the proof of \eqref{eq:HLength}, we can obtain a sub-Riemannian geodesic loop in $\SU (2)_{R^*}$ whose length is $\frac{4\pi}{\lVert R^* \rVert}$.

(b) We claim that $\gamma' (0) = \gamma' (1)$. This information can be found in \cite[Page 148, Exercise 3]{hel} and its proof is based on the fact that for all $t \in {\mathbb R}$,
\begin{equation*}
\begin{split}
\gamma (t+1) = & \; e^{(t+1)X} \, e^{-(t+1)X^{\perp}} = e^{tX} \, e^{X} \, e^{-tX^{\perp}} \, e^{-X^{\perp}}\\ = & \; e^{tX} \, e^{X} \, e^{-X^{\perp}} \, e^{-tX^{\perp}} = e^{tX} \, e^{-tX^{\perp}} = \gamma(t) \, .
\end{split}
\end{equation*}
By \eqref{eq:GammaPrime}, it follows that
$$ X-X^{\perp} = e^{X} \, (X-X^{\perp} ) \, e^{-X^{\perp}} \, ,$$
which clearly implies \eqref{eq:H}.

(c) By the properties of the adjoint representation there exist \linebreak $S_1, \dots , S_l \in \T$ and $S'_1 , \dots , S'_l \in \T$, where $l$ is the number of Weyl chambers, such that
\begin{equation}\label{eq:Ad(G)1}
\Ad (\G ) (X^{\perp}) \cap \T = \{ S_1, \dots , S_l \} \, 
\end{equation}
and
\begin{equation}\label{eq:Ad(G)2}
\Ad (\G ) (X) \cap \T = \{ S'_1, \dots , S'_l \} \, .
\end{equation}
Note that in \eqref{eq:Ad(G)1} and \eqref{eq:Ad(G)2} some of the $S_j$ and $S'_j$ might be repeated if they belong to one of the hyperplanes $P(R,0)$ for $R \in \roots$.

Therefore,
\begin{equation}\label{eq:Ad(G)3}
\Ad (\G ) (e^{X^{\perp}}) \cap \To = \{ e^{S_1}, \dots , e^{S_l} \} \, ,
\end{equation}
and
\begin{equation}\label{eq:Ad(G)4}
\Ad (\G ) (e^X) \cap \To = \{ e^{S'_1}, \dots , e^{S'_l} \} \, .
\end{equation}
The fact that $e^{X^{\perp}} = e^X$ implies that the sets in \eqref{eq:Ad(G)3} and \eqref{eq:Ad(G)4} must coincide. Hence, by rearranging the elements if necessary, we can suppose that for all $1 \leq j \leq l$ we have $e^{S_j} = e^{S'_j}$, which immediately implies the existence of $L_j \in \La_{\T}$ such that $S'_j = S_j + L_j $. 
\end{proof}

Since $X^{\perp} \in \T$, we can see that one of $S_1, \dots , S_l$ in \eqref{eq:Ad(G)1} must be $X^{\perp}$. Therefore, we have the following corollary.

\begin{cor}\label{cor:Xdecomp2}
Under the assumptions of Theorem \ref{thm:Xdecomposition}, there exist $g \in \G$ and $L \in \La_{\T}$ such that 
$X = \Ad (g) (X^{\perp} + L) \, .$
\end{cor}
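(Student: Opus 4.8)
The plan is to read the corollary off directly from part (c) of Theorem \ref{thm:Xdecomposition}, using the elementary observation that $X^{\perp}$ lies in its own adjoint orbit. First I would note that taking $g = I$ gives $\Ad(I)(X^{\perp}) = X^{\perp}$, and since $X^{\perp} \in \T$ by hypothesis, this shows $X^{\perp} \in \Ad(\G)(X^{\perp}) \cap \T$. Hence $X^{\perp}$ is one of the elements listed in \eqref{eq:Ad(G)1}, say $X^{\perp} = S_{j_0}$ for some index $1 \leq j_0 \leq l$; this is precisely the content of the remark preceding the corollary, now with its short justification made explicit.

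Next I would invoke Theorem \ref{thm:Xdecomposition}(c), which furnishes an element $L_{j_0} \in \La_{\T}$ such that $S_{j_0} + L_{j_0}$ belongs to $\Ad(\G)(X) \cap \T$. Substituting $S_{j_0} = X^{\perp}$, this reads $X^{\perp} + L_{j_0} \in \Ad(\G)(X)$, so by the definition of the adjoint orbit there exists $h \in \G$ with $\Ad(h)(X) = X^{\perp} + L_{j_0}$.

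Finally I would apply $\Ad(h^{-1})$ to both sides; since $\Ad$ is a homomorphism we have $\Ad(h^{-1})\,\Ad(h) = \Id$, and therefore $X = \Ad(h^{-1})(X^{\perp} + L_{j_0})$. Setting $g = h^{-1} \in \G$ and $L = L_{j_0} \in \La_{\T}$ then yields the asserted identity $X = \Ad(g)(X^{\perp} + L)$.

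I do not expect a genuine obstacle here: the corollary is the special case of Theorem \ref{thm:Xdecomposition}(c) obtained by singling out the orbit representative $X^{\perp}$ itself, and all the real work has already been done in proving that theorem. The only point needing a word of care is that $X^{\perp}$ actually occurs among the $S_j$, which follows at once from $\Ad(I)(X^{\perp}) = X^{\perp}$ together with $X^{\perp} \in \T$; everything else is a bookkeeping application of the fact that $\Ad$ is an isomorphism of $\Lie$.
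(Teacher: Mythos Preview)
Your proposal is correct and follows exactly the paper's approach: the paper derives the corollary from the one-line observation (stated just before the corollary) that $X^{\perp}\in\T$ already lies in $\Ad(\G)(X^{\perp})\cap\T$, so it equals some $S_{j_0}$, and then Theorem~\ref{thm:Xdecomposition}(c) supplies the corresponding $L_{j_0}$. You have simply spelled out the inversion step $X=\Ad(h^{-1})(X^{\perp}+L_{j_0})$ explicitly, which the paper leaves implicit.
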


\section{The case of $\SU (2)$}\label{sec:4SU(2)}
\noindent The special unitary group of $2 \times 2$ complex matrices is
$$\SU (2) = \left\{ g = \left( \begin{array}{cc} \alpha & \beta\\ - \overline{\beta} & \overline{\alpha} \end{array}\right)  \; : \; \alpha, \beta \in \comp \, , \; \lvert \alpha \rvert^2+ \lvert \beta \rvert^2 =1 \right\} \, .$$
Its Lie algebra is the three dimensional real Lie algebra
$$\su (2) = \left\{ X = \left( \begin{array}{cc} i x_1 & x_2+i x_3\\-x_2 + i x_3 & -ix_1 \end{array} \right) \; : \; x_1 , x_2 , x_3 \in \real \right\} \, .$$
The Killing form of $\su (2)$ is
$$K(X,Y) = 4 \tr (XY) \, ,$$
while the inner product \eqref{eq:InnerProduct} is defined as
$$\langle X , Y \rangle = - \frac{1}{2} \tr (XY) \, .$$
The Cartan subalgebra $\T$ is spanned by the unit vector
$$T_1 = \begin{pmatrix} - i & 0 \\ 0 & i \end{pmatrix}  \, $$
and the orthonormal basis of $\Ho$ is formed by
$$X_1 = \begin{pmatrix}0 & 1\\-1& 0 \end{pmatrix}  \,  \; \text{and} \;  Y_1= \begin{pmatrix} 0 & i\\i& 0 \end{pmatrix} \, .$$ 
The exponential map $\exp \colon \su (2) \to \SU (2)$ has the following simple form:
\begin{equation}\notag \label{eq:SU(2)e^X}
 \exp (X) = e^X = \cos (\lVert X \rVert) \, I + \frac{\sin (\lVert X \rVert)}{\lVert X\rVert} \, X \, .
\end{equation}
Consider $X = aX_1 + bY_1 +cT_1$. Then, for $0 \leq t \leq 1$,
$$e^{tX} = \cos (t\sqrt{a^2+b^2+c^2}) \, I + \frac{\sin (t\sqrt{a^2+b^2+c^2})}{\sqrt{a^2+b^2+c^2}} \, X ,$$
and
$$e^{tX^{\perp}} = \cos (tc) \, I + \sin (tc) \, T_1 .$$
The Riemannian geodesic $e^{tX}$ closes the first time at $t=1$ if  we have $\sqrt{a^2+b^2+c^2} = 2\pi$, which shows that the Riemannian length spectrum equals $\{2\pi\}$.
For the sub-Riemannian geodesic $\gamma (t) = e^{tX} e^{-tX^{\perp}}$, the condition $e^X = e^{X^{\perp}}$ implies that
\begin{equation}\label{eq:SU(2)sqrtCondition}
\sqrt{a^2+b^2+c^2} = n \pi  \; \text{and} \; \lvert c \rvert = m \pi  \, ,
\end{equation}
where $m,n \in {\mathbb N} \cup \{0\} $, $m \leq n$, are both even or both odd.
In order that $\gamma (t) \neq I$ for all $0<t<1$, we have to require that
$m,  n \in {\mathbb N} \cup \{0\}$ are both odd and relatively prime or both even and $\frac{m}{2},  \frac{n}{2}$ are relatively prime with one of them odd and the other even.

Notice that, as the only positive root of $\SU (2)$ is $R_1 = 2T_1$, we have $\lVert R^* \rVert = \lVert 2T_1 \rVert = 2$, and therefore
\begin{equation}\label{eq:HLength}
\lVert H \rVert = \lVert X - X^{\perp} \rVert = \pi \sqrt{n^2 - m^2} = \frac{2\pi \sqrt{n^2-m^2}}{\lVert R^* \rVert}\, .
\end{equation}

The same result can be obtained by  Theorem \ref{thm:Xdecomposition}. In $\SU (2)$ the unit lattice is
$\La_{\T} = \{ 2k \pi T_1 \; : \: k \in {\mathbb Z} \} \, .$
Formula \eqref{eq:H} implies that 
$$c = m \pi \in \pi {\mathbb Z}\, .$$
The matrices $S'_1$ and $S'_2$ from \eqref{eq:Ad(G)2} are diagonal with entries consisting of the eigenvalues of $X$. Thus, 
Theorem \ref{thm:Xdecomposition} implies that there exists some $k \in {\mathbb N}$ such that
\begin{equation}\notag \label{eq:SU(2)sqrtCondition2}
\sqrt{a^2+b^2+m^2 \pi^2} - m\pi = 2k \pi \, ,
\end{equation}
and this implies \eqref{eq:SU(2)sqrtCondition}.

We have therefore presented two algebraic proofs of the following proposition, which is a special case of Theorems \ref{thm:RGeodesics/RPrime}, \ref{thm:PurelySubRLengths}, and \ref{thm:shortestlengths}, and which extends the results from \cite{chang,vanval}.

\begin{prop}\label{prop41} In $\SU (2)$ the following properties hold.\\
(a) The Riemannian geodesic loops have length equal to $2\pi$.\\ 
(b) The horizontal Riemannian geodesic loops have length equal to $2\pi$.\\
(c) The shortest sub-Riemannian geodesic loops have length equal to $2\pi$.\\
(d) The sub-Riemannian geodesic loops have lengths equal to $\pi \sqrt{n^2 - m^2}$, where
$m,  n \in {\mathbb N} \cup \{0\}$ are odd and relatively prime or even and $\frac{m}{2},  \frac{n}{2}$ are relatively prime with one of them odd and the other even.

\end{prop}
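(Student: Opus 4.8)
Proposition \ref{prop41} asks for four explicit length-spectrum statements in $\SU(2)$, and the strategy is to deduce each as a specialization of the general machinery already developed, while cross-checking against the direct computation carried out earlier in Section \ref{sec:4SU(2)} using the explicit exponential map. The unifying observation is that $\SU(2)$ has rank one, a single positive (and highest) root $R^* = R_1 = 2T_1$ with $\lVert R^* \rVert = 2$, and simplest possible unit lattice $\La_{\T} = \{2k\pi T_1 : k \in {\mathbb Z}\}$. First I would establish part (a): by Theorem \ref{thm:RGeodesics/RPrime}(b) every Riemannian geodesic loop has length $\lVert n_1 2P_{R_1}\rVert$ with $n_1$ relatively prime, which forces $n_1 = 1$; since $P_{R_1} = 2\pi R_1 / \lVert R_1\rVert^2 = \pi T_1$, we get $2P_{R_1} = 2\pi T_1$, whose norm is $2\pi$. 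This matches the direct computation that $e^{tX}$ closes first at $t=1$ precisely when $\sqrt{a^2+b^2+c^2} = 2\pi$.

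Next I would handle parts (b) and (c) together, since in rank one they coincide with (a). For (b), Theorem \ref{thm:PurelySubRLengths} gives exactly the same length formula for horizontal Riemannian geodesic loops, so the spectrum is again $\{2\pi\}$. For (c), Theorem \ref{thm:shortestlengths} states the shortest sub-Riemannian loop length is $\frac{4\pi}{\lVert R^*\rVert} = \frac{4\pi}{2} = 2\pi$; I would simply substitute $\lVert R^*\rVert = 2$. These three parts are essentially bookkeeping: the content is recognizing that rank one collapses the multi-index $(n_1,\dots,n_m)$ to a single coprimality condition with the forced value $1$.

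The substantive part is (d), the full sub-Riemannian loop spectrum, and here I would rely on the explicit description already assembled. Writing $X = aX_1 + bY_1 + cT_1$, the closure condition $e^X = e^{X^\perp}$ for $\gamma(t) = e^{tX}e^{-tX^\perp}$ translates, via the explicit form of the exponential, into the pair of conditions in \eqref{eq:SU(2)sqrtCondition}: $\sqrt{a^2+b^2+c^2} = n\pi$ and $\lvert c\rvert = m\pi$ with $m,n$ of the same parity. From \eqref{eq:HLength} the length is $\lVert H\rVert = \lVert X - X^\perp\rVert = \sqrt{a^2+b^2} = \pi\sqrt{n^2 - m^2}$. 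It then remains to impose the primitivity condition $\gamma(t) \neq I$ for $0 < t < 1$; I would analyze when $\gamma$ first returns to the identity by examining the two cases (both $m,n$ odd versus both even) and extracting the stated coprimality requirements, namely $m,n$ odd and coprime, or $m,n$ even with $\frac{m}{2},\frac{n}{2}$ coprime of opposite parity.

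The main obstacle I anticipate is the careful case analysis in (d) ensuring the loop is \emph{primitive} rather than a multiple traversal of a shorter loop. The parity and coprimality conditions must be shown to be precisely those that prevent $\gamma(t) = I$ at some intermediate $t \in (0,1)$; this requires tracking when both factors $e^{tX}$ and $e^{tX^\perp}$ simultaneously return appropriately, and distinguishing the even and odd regimes is the delicate step. Everything else reduces to substituting $\lVert R^*\rVert = 2$ and $m=1$ (for the shortest loops with $n$ odd, giving length $\pi\sqrt{n^2-1}$ minimized appropriately) into formulas already derived.
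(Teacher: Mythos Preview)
Your proposal is correct and follows essentially the same approach as the paper: parts (a)--(c) are obtained by specializing Theorems \ref{thm:RGeodesics/RPrime}, \ref{thm:PurelySubRLengths}, and \ref{thm:shortestlengths} to the rank-one root system with $\lVert R^*\rVert = 2$, and part (d) comes from the explicit exponential computation yielding \eqref{eq:SU(2)sqrtCondition} and \eqref{eq:HLength}, followed by the primitivity analysis. The paper in fact records two proofs of (d)---the direct exponential computation you outline, and a second via Theorem \ref{thm:Xdecomposition}(c) applied to the eigenvalues of $X$---but your route matches the first of these; your closing parenthetical about $m=1$ minimizing is slightly garbled (the shortest loop is the horizontal Riemannian case $m=0$, $n=2$), but this does not affect the argument.
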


\begin{remark}
As an introduction to the next section, let us show that we can use Vi\`{e}te's formulas to get the result of Proposition \ref{prop41} (d). Indeed, the characteristic polynomial of $X$ is $P (\lambda) = \lambda^2 + (a^2 +b^2+c^2)$, and by Theorem 3.3 and the first Vi\`{e}te's formula, the eigenvalues of $X$ must be of the form $\lambda_1 = -ci - 2k\pi i$ and $\lambda_2 = ci + 2k\pi i$, where $ k \in {\mathbb N}$.
The second Vi\`{e}te's formula gives
$$\lambda_1 \lambda_2 = (c+2k\pi)^2 = a^2 +b^2+c^2 \, ,$$
which leads to \eqref{eq:SU(2)sqrtCondition}.
\end{remark}

\begin{remark}
For comparison with the case of $\SU (3)$ in the next section, note that in $\SU (2)$ the sub-Riemannian geodesic loops have the form
\begin{equation}\label{eq:SU(2)geodesicroot}
\gamma (t) = e^{t(a_1 X_1 + b_1 Y_1 + \frac{m}{2} R_1)} \, e^{-t\frac{m}{2} R_1},
\end{equation} 
where $a, b, c, m$ satisfy \eqref{eq:SU(2)sqrtCondition}.
\end{remark}

\section{The case of $\SU (3)$}\label{sec:5SU(3)}
\noindent Consider the special unitary group of $3 \times 3$ complex matrices
$$\SU (3) = \{ g \in \GL (3, \comp) \, : \; g \cdot g^* = I \, , \; \det g = 1 \} \, ,$$
and its Lie algebra
$$\su (3) = \{ X \in \gl (3, \comp) \, : \; X + X^* = 0 \, , \; \tr X =0 \} \, .$$
The inner product is defined by 
$$\langle X , Y \rangle = - \frac{1}{2} \tr ( X  Y ) \, .$$
We consider the maximal torus
$$\To = \left\{  \begin{pmatrix}  e^{ia_1} & 0 & 0 \\ 0 & e^{ia_2} & 0 \\ 0 & 0 & e^{ia_3} \end{pmatrix} \; : \; a_1, \, a_2, \, a_3 \in \real \, , \; a_1 + a_2 + a_3 = 0 \right\}$$
and its Lie algebra 
$$\T = \left\{  \begin{pmatrix} ia_1 & 0 & 0 \\ 0& ia_2 & 0 \\ 0&0& ia_3 \end{pmatrix} \; : \; a_1, \, a_2, \, a_3 \in \real \,  , \; a_1 + a_2 + a_3 = 0 \right\},$$
 which is our choice for the Cartan subalgebra.
The following are the Gell-Mann matrices, which form an
orthonormal basis of $\su (3)$ and satisfy the relations in formulas \eqref{eq:LieAlgebraDecomposition}--\eqref{eq:HorizontalBasisProperties}:

\begin{align*}
&T_1 = \begin{pmatrix} -i  & 0 & 0 \\ 0& i  & 0 \\ 0&0& 0 \end{pmatrix}, 
&&T_2 = \begin{pmatrix} \frac{-i}{\sqrt{3}}  & 0 & 0 \\ 0& \frac{-i}{\sqrt{3}}  & 0 \\ 0&0& \frac{2i}{\sqrt{3}} \end{pmatrix}, \\[0.2cm]
&X_1 = \begin{pmatrix} 0  & 1 & 0 \\ -1 & 0  & 0 \\ 0&0& 0 \end{pmatrix}, 
&&Y_1 = \begin{pmatrix} 0 & i & 0 \\  i & 0 & 0 \\ 0&0& 0 \end{pmatrix},
\\[0.2cm]
&X_2 = \begin{pmatrix} 0  & 0 & 0 \\ 0 & 0  & 1 \\ 0&-1& 0 \end{pmatrix}, 
&&Y_2 = \begin{pmatrix} 0 & 0 & 0 \\  0 &  0 & -i \\ 0&-i& 0 \end{pmatrix},
\\[0.2cm]
&X_3 = \begin{pmatrix} 0  & 0 & 1 \\ 0 & 0  & 0 \\ -1&0& 0 \end{pmatrix},  
&&Y_3 = \begin{pmatrix} 0 & 0 & i \\  0 &  0 & 0 \\ i&0& 0 \end{pmatrix}.
\end{align*}

The positive roots are the following:
\begin{align*}
& R_1 = \begin{pmatrix} - 2i  & 0 & 0 \\ 0 & 2i  & 0 \\ 0&0& 0 \end{pmatrix} = 2T_1 , \\[0.2cm] 
& R_2 = \begin{pmatrix} 0 & 0 & 0 \\  0 &  2i & 0 \\  0 & 0 & -2i \end{pmatrix}  = T_1 - \sqrt{3} T_2  ,\\[0.2cm]
& R_3 = \begin{pmatrix}-2i & 0 & 0 \\  0 &  0 & 0 \\  0 & 0 & 2i \end{pmatrix}  = T_1 + \sqrt{3} T_2  .
\end{align*}
The highest root is ${R^*} = R_1$, while the two simple roots are $R_2$ and 
$R_3$.
The unit lattice is
$$\La_{\T} = \{ n \pi R_2 + m \pi R_3 \, : \; n \, , \; m \in {\mathbb Z} \} \, ,$$
and observe that 
\begin{equation}\label{eq:SU(3)exp}
e^{\pi R_1} = e^{\pi R_2} = e^{\pi R_3} = I \, .
\end{equation}
For $k = 1,2,3$, the projections of the origin onto the hyperplanes $P(R_k , 2\pi )$ are 
$$P_{R_k} = \frac{\pi}{2} R_k \, ,$$
and, indeed, \eqref{eq:SU(3)exp} is equivalent to
$$e^{2P_{R_k }}= I \, , \; k=1,2,3.$$

Observing that 
$$\lVert n \pi R_2 + m \pi R_3 \rVert = 2\pi \sqrt{n^2 - nm +m^2} \, ,$$
we conclude that, in $\SU (3)$, Theorems \ref{thm:RGeodesics/RPrime}, \ref{thm:PurelySubRLengths}, and \ref{thm:shortestlengths} have the following special form.

\begin{prop} In  $\SU (3)$ the following properties hold.\\
(a) The Riemannian geodesic loops have lengths equal to $$ 2\pi \sqrt{n^2 - nm +m^2} \, ,$$ where $n ,m \in {\mathbb N} \cup \{0\}$ are relatively prime.\\
(b) The horizontal Riemannian geodesic loops have lengths equal to $$ 2\pi \sqrt{n^2 - nm +m^2} \, ,$$
where $n ,m \in {\mathbb N} \cup \{0\}$ are relatively prime.\\
(c) The shortest sub-Riemannian geodesic loops have length equal to $2\pi$.
\end{prop}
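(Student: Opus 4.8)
The plan is to deduce all three parts directly from the general length-spectrum results of Section~\ref{sec:3LengthsSR}, specialized to the explicit root data of $\SU(3)$ recorded above. The only genuine computation is the evaluation of $\lVert n\pi R_2 + m\pi R_3\rVert$, which has already been carried out just before the statement.

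For part (a), I would invoke Theorem~\ref{thm:RGeodesics/RPrime}, which asserts that every Riemannian geodesic loop has length $\lVert n_1\,2P_{R_1} + \cdots + n_m\,2P_{R_m}\rVert$, with the $R_j$ ranging over the simple roots and the $n_j$ relatively prime. In $\SU(3)$ the simple roots are $R_2$ and $R_3$, and from $P_{R_k} = \frac{\pi}{2}R_k$ we have $2P_{R_k} = \pi R_k$. Substituting and using $\lVert n\pi R_2 + m\pi R_3\rVert = 2\pi\sqrt{n^2 - nm + m^2}$ yields the stated spectrum. For part (b), the identical substitution into Theorem~\ref{thm:PurelySubRLengths} produces the same values for the horizontal Riemannian geodesic loops, so (b) follows with no additional work.

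For part (c), I would apply Theorem~\ref{thm:shortestlengths}, according to which the shortest sub-Riemannian (and Riemannian) geodesic loops share the common length $\frac{4\pi}{\lVert R^*\rVert}$, where $R^*$ is the highest root. Here $R^* = R_1 = 2T_1$, so $\lVert R^*\rVert = \lVert 2T_1\rVert = 2$ by orthonormality of $\{T_1, T_2\}$, whence the shortest length is $\frac{4\pi}{2} = 2\pi$.

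I do not expect any serious obstacle: the argument is a direct specialization. The only points requiring care are bookkeeping ones, namely confirming that $R_2, R_3$ are the simple roots and $R_1$ the highest root (both recorded above), and verifying the inner-product identities $\lVert R_2\rVert^2 = \lVert R_3\rVert^2 = 4$ and $\langle R_2, R_3\rangle = -2$ underlying the norm formula. These follow at once from $R_2 = T_1 - \sqrt{3}\,T_2$ and $R_3 = T_1 + \sqrt{3}\,T_2$ together with the orthonormality of $\{T_1,T_2\}$.
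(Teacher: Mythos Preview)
Your proposal is correct and follows exactly the paper's approach: the proposition is stated there as the direct specialization of Theorems~\ref{thm:RGeodesics/RPrime}, \ref{thm:PurelySubRLengths}, and \ref{thm:shortestlengths} to $\SU(3)$, using the precomputed identity $\lVert n\pi R_2 + m\pi R_3\rVert = 2\pi\sqrt{n^2 - nm + m^2}$ and the fact that $R^* = R_1$ with $\lVert R^*\rVert = 2$. Your additional verification of the inner products $\langle R_2,R_3\rangle = -2$ and $\lVert R_k\rVert^2 = 4$ is harmless extra bookkeeping, since the norm formula is already recorded in the text.
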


To obtain information about the full sub-Riemannian length spectrum in $\SU (3)$, consider 
\begin{align*}
& H = a_1 X_1 + b_1 Y_1 + a_2 X_2 + b_2 Y_2 + a_3 X_3 + b_3 Y_3, \\
&X^{\perp} = \frac{c_1}{2} R_3 + \frac{c_2}{2} R_2 = \left( \begin{array}{ccc}  - c_1 i & 0 &0\\ 0 & c_2 i &0\\0 & 0 & (c_1 -c_2)i \end{array} \right) \, ,\\
& X = H + X^{\perp} , \; \text{and} \; \gamma (t) = e^{tX} e^{-tX^{\perp}} .
\end{align*}

The characteristic polynomial of $X$ is
$$P(\lambda) = -\lambda^3 - p \lambda + qi ,$$
where
$$p = \sum_{j=1}^3 (a_j^2 + b_j^2) + c_1^2 + c_2^2 - c_1 c_2  ,$$
and
\begin{align*}
q = \, & c_2 \left( a_3^2+b_3^2 - a_1^2 - b_1^2 + c_1^2 \right) \\
- & c_1 \left( a_2^2+b_2^2 - a_1^2 - b_1^2 + c_2^2 \right) \\
+ & 2 \left(a_1 a_2 b_3 + a_1 b_2 a_3 - b_1 a_2 a_3 + b_1 b_2 b_3 \right) .
\end{align*}
Note that
$p = \lVert H \rVert^2 + \lVert X^{\perp} \rVert^2 .$
Formula \eqref{eq:H} gives
\begin{align}\label{c1c2}
c_1 + c_2  &\in 2\pi {\mathbb Z} \; \; \text{if} \; \; a_1+b_1 i \neq 0 \, , \nonumber\\
c_1 -  2c_2 &\in 2\pi {\mathbb Z} \; \;  \text{if} \; \; a_2+b_2 i \neq 0 \, ,\\
2c_1  - c_2  &\in 2\pi {\mathbb Z} \; \; \text{if} \; \; a_3+b_3 i \neq 0 \, . \nonumber
\end{align}

To see the connection with the case of $\SU (2)$, let us start with the following simple cases.

{\bf Case 1.} Consider $c_2 = - c_1$, $a_2 =b_2 = a_3 = b_3 =0$. This corresponds to the case of $\SU (2)$ from the previous section and these geodesics are singular in $\SU (3)$.  Therefore the sub-Riemannian geodesics have the form \eqref{eq:SU(2)geodesicroot} and the lengths $\pi \sqrt{n^2-m^2}$ from 
Proposition \ref{prop41} (d).

{\bf Case 2.}  Consider $c_2 =0$, $a_2 =b_2 = a_3 = b_3 =0$. These geodesics are not contained in any copy of $\SU (2)$ and are regular in $\SU (3)$.
Here, $c_1 = 2m\pi$, $m \in {\mathbb Z} $, and the eigenvalues of $X $ are
$$- 2m\pi i \; \; \text{and} \; \; \left(m \pi \pm \sqrt{a_1^2+ b_1^2 + m^2 \pi^2}\right)  i \, .$$ 
By Theorem \ref{thm:Xdecomposition} (c) we have that
\begin{equation}\label{eq:|c_1|}
 \lvert c_1 \rvert = 2m\pi \; \text{and} \; a_1^2+b_1^2 + m^2 \pi^2 = n^2 \pi^2 \, ,
\end{equation}
where $m,n \in {\mathbb N} \cup \{0\} $, $m \leq n$ are both odd or even. Therefore, the sub-Riemannian geodesic loop corresponding to \eqref{eq:|c_1|} is
$$\gamma (t) = e^{t(a_1 X_1 + b_1 Y_1 + m R_3)} \, e^{-tm R_3} \, ,$$
and its length is $\pi \sqrt{n^2-m^2}$. 

{\bf Case 3.} If at least two of $a_j + b_j i$, $j=1,2,3$, are not zero, then
$$c_1 = \frac{4n+2m}{3} \pi , \; c_2 = \frac{2n-2m}{3} \pi \, , $$
where $n,m \in {\mathbb Z}$.
Theorem \ref{thm:Xdecomposition} (c) and the first Vi\`{e}te formula for the characteristic polynomial imply that the eigenvalues of $X$ must have the form
\begin{align}\label{lambda}
&\lambda_1 = \left( -c_1 - 2k\pi \right) i , \nonumber\\
&\lambda_2 = \left( c_2 + 2l \pi \right) i , \\
&\lambda_3 = \left( c_1-c_2 + 2 (k-l) \pi \right) i . \nonumber
\end{align}
The second Vi\`e{t}e formula gives
\begin{equation}\label{normH}
\lVert H \rVert  = \sqrt{ c_1 (4k-2l)\pi +  c_2 (4l-2k)\pi + 4 (k^2+l^2-kl ) \pi^2} .
\end{equation}
From the third Vi\`{e}te formula we find
\begin{multline}\label{viete3}
4 c_1c_2 (k-l) \pi + 4 c_1 l(2k-l) \pi^2 + 4 c_2 k (k-2l) \pi^2 \\  \shoveright{+ 2 c_1^2 l \pi - 2 c_2^2 k \pi  + 8kl(k-l)\pi^3} \\
\shoveleft = (a_3^2 + b_3^2 - a_1^2 - b_1^2 ) c_2 - (a_2^2+b_2^2 - a_1^2 - b_1^2 ) c_1\\ +
2(a_1 a_2 b_3 + a_1 b_2 a_3 - b_1 a_2 a_3 + b_1 b_2 b_3 ) \, .
\end{multline} 

The complexity of  formula \eqref{viete3} hides its true geometric meaning. However, in the case when $q=0$,  formula \eqref{viete3} reduces to $0=0$, and we have the following eigenvalues for $X$:
$$ 0 \; \; \text{and} \; \; \pm \sqrt{\lVert H \rVert^2 + \lVert X^{\perp} \rVert^2}\,  i .$$
Without loss of generality we can assume that $\lambda_1 = 0$. Then $c_1 = 2k\pi $, which implies that $m = 3k-2n$ and $c_2  = 2(n-k)$. 
From \eqref{normH} it follows that
\begin{equation*}
\lVert H \rVert  = 2\pi  \sqrt{ (2k-l)^2 -nk + 2nl },
\end{equation*}
which in the case of $k=l$ reduces to
\begin{equation*}
\lVert H \rVert  = 2\pi  \sqrt{ k^2 + nk} = 2\pi \sqrt{\left( k+\frac{n}{2} \right)^2 - \frac{n^2}{4} } .
\end{equation*}
This shows that, as expected, formula  \eqref{normH} includes the sub-Riemannian geodesic loop length spectrum of $\SU (2)$.

Note that $q=0$ is satisfied if $c_2 = 0$, $a_1^2 + b_1^2 = a_2^2 + b_2^2$ and $a_3 = b_3 =0$. As a numerical example we can give the sub-Riemannian geodesic loop of length $8\pi$ described by
$$\gamma (t) = e^{\pi (5X_1 + \sqrt{7}Y_1 + 5 X_2 +\sqrt{7}Y_2 + 3 R_3)t} e^{-3\pi R_3 t} \, .$$

In conclusion, we have the following result.
\begin{prop}
In $\SU(3)$ the sub-Riemannian geodesic loops have lengths equal to
\begin{equation*}
2\pi \sqrt{ \left( \frac{(2n+m) (2k-l)}{3} +  \frac{(n-m) (2l-k)}{3} \right) 
+  (k^2+l^2-kl ) } \, ,
\end{equation*}
where $m,n,k,l \in {\mathbb Z}$.
\end{prop}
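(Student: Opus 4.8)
The plan is to obtain the stated formula as a direct substitution into the length expression \eqref{normH} derived for Case 3, and then to verify that this single formula already contains the lengths produced in Cases 1 and 2, so that it describes the complete sub-Riemannian length spectrum. By Theorem \ref{thm:SubRiemannianGeodesicsNormal} every sub-Riemannian geodesic through the identity has the form $\gamma(t) = e^{tX} e^{-tX^{\perp}}$ with $X = H + X^{\perp}$, and by Theorem \ref{thm:Xdecomposition}(a) the length of the corresponding loop equals $\lVert H \rVert$. The problem therefore reduces to evaluating $\lVert H \rVert$ under the closing condition $e^{X} = e^{X^{\perp}}$.

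First I would record how the closing condition pins down $X^{\perp}$. Condition \eqref{eq:H} yields the lattice relations \eqref{c1c2}, and when at least two of the components $a_j + b_j i$ are nonzero, solving any two of those relations for the pair $(c_1, c_2)$ produces
\[
c_1 = \frac{4n+2m}{3}\pi, \qquad c_2 = \frac{2n-2m}{3}\pi, \qquad n, m \in {\mathbb Z};
\]
a short check shows that when all three relations hold they are automatically consistent with this same parametrization. Simultaneously, Theorem \ref{thm:Xdecomposition}(c) together with the first Vi\`{e}te formula forces the eigenvalues of $X$ into the form \eqref{lambda}, and the second Vi\`{e}te formula then gives the expression \eqref{normH} for $\lVert H \rVert$ in terms of $c_1, c_2, k, l$.

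Next I would carry out the substitution. Writing $4n+2m = 2(2n+m)$, $2n-2m = 2(n-m)$, $4k-2l = 2(2k-l)$, and $4l-2k = 2(2l-k)$, the two cross terms in \eqref{normH} become $\tfrac{4}{3}(2n+m)(2k-l)\pi^2$ and $\tfrac{4}{3}(n-m)(2l-k)\pi^2$; factoring $4\pi^2$ out of all three summands and extracting the square root yields exactly
\[
2\pi \sqrt{ \frac{(2n+m)(2k-l)}{3} + \frac{(n-m)(2l-k)}{3} + (k^2+l^2-kl) } \, ,
\]
which is the asserted length. This settles every loop falling under Case 3.

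The final and most delicate step is completeness. I must confirm that the case division on the vanishing pattern of the pairs $(a_j, b_j)$ is exhaustive and that the degenerate Cases 1 and 2, in which fewer than two of the $a_j + b_j i$ are nonzero, contribute no lengths outside the formula. Specializing to the $q=0$, $k=l$ reduction carried out above collapses the formula to $2\pi\sqrt{k^2+nk} = \pi\sqrt{(2k+n)^2 - n^2}$, which is precisely the $\SU(2)$ spectrum $\pi\sqrt{N^2 - M^2}$ of Proposition \ref{prop41}(d); hence the lower-rank lengths are genuinely recovered. I expect the main obstacle to lie here rather than in the substitution: one must verify that each admissible sub-case of \eqref{c1c2} really reparametrizes to the single pair $(c_1, c_2)$ above, and that as $m, n, k, l$ range over ${\mathbb Z}$, subject to the compatibility constraint \eqref{viete3}, one obtains neither spurious lengths (values of the radicand that no genuine loop realizes) nor gaps in the spectrum.
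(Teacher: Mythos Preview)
Your proposal is correct and follows essentially the same approach as the paper: the proposition is obtained by substituting the Case~3 parametrization of $(c_1,c_2)$ into formula~\eqref{normH}, together with the observation that the $\SU(2)$ lengths from Cases~1 and~2 are already contained in the resulting expression. You also rightly flag the completeness issue surrounding the constraint~\eqref{viete3}, which the paper itself leaves implicit.
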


\subsection*{Acknowledgements}
The authors are grateful to Richard Montgomery for helpful discussions and advice on exposition.  We also thank the referees for valuable suggestions.


\begin{thebibliography}{99}
\bibitem{agra1} Agrachev, A. A. and Sarychev, A. V., Sub-Riemannian metrics: minimality of abnormal geodesics versus subanalyticity, ESAIM Control Optim. Calc. Var., \textbf{4} (1999), 377--403. 

\bibitem{arv}  Arvanitoyeorgos, A., {\it An Introduction to Lie Groups and the Geometry of Homogeneous Spaces}, AMS Student Mathematical Library, \textbf{22} (2003).

\bibitem{bosc1} Boscain, U., Chambrion, T.,  and Gauthier, J.-P., On the $K+P$ problem for a three-level quantum system: Optimality of resonance, J. Dynam. Control Systems, \textbf{8} (2002), 547--572.


\bibitem{chang} Chang, D.-C., Markina, I., and Vasil'ev, A., Hopf fibration: Geodesics and distances, J. Geom. Phys. \textbf{61}, 6 (2011), 986--1000.

\bibitem{dand} D'Andrea, A. and Maffei, A., Commutators of small elements in compact semisimple Lie groups and Lie algebras, J. Lie Theory, \textbf{26}, 3 (2016), 683--690.


\bibitem{dom15} Domokos, A., Subelliptic Peter-Weyl and Plancherel theorems on compact, connected, semisimple Lie groups, Nonlinear Analysis, \textbf{126} (2015), 131--142.

\bibitem{dui} Duistermaat, J. J. and Kolk, J. A. C., {\it Lie Groups}, Springer-Verlag, Berlin Heidelberg New-York (2000).


\bibitem{hel} Helgason, S., {\it Differential Geometry, Lie Groups, and Symmetric Spaces},  AMS Graduate Studies in Mathematics, \textbf{34} (2001).


\bibitem{vanval} Klapheck, D. and VanValkenburgh, M., The length spectrum of the sub-Riemannian three-sphere, arXiv:1507.03041 [math.DG] (2015).

\bibitem{kling} Klingerberg, W., \textit{Lectures on Closed Geodesics}, Springer-Verlag (1978).

\bibitem{knapp} Knapp, A. W., {\it Representation Theory of Semisimple Groups}, Princeton University Press (1986).

\bibitem{mont1} Montgomery, R., Singular extremals on Lie groups, Math. Control Signals Systems, \textbf{7}, 3 (1994), 217--234.

\bibitem{mont2} Montgomery, R., A survey of singular curves in sub-Riemannian geometry, J. Dynam. Control Systems, \textbf{1}, 1 (1995), 49--90.

\bibitem{mont} Montgomery, R., {\it A Tour of Subriemannian Geometries, Their Geodesics and Applications}, AMS Mathematical Surveys and Monographs, \textbf{91} (2002).

\bibitem{simon} Simon, B., \textit{Representations of Finite and Compact Groups}, Graduate Studies in Mathematics, \textbf{10} (1996).



\end{thebibliography}
\end{document}